\documentclass[reqno,11pt]{amsart}
\usepackage[T1]{fontenc} 
\usepackage[utf8]{inputenc} 
\usepackage{comment}
\usepackage{mathtools}
\usepackage{amssymb}
\usepackage{amsthm}
\usepackage{bbm}
\usepackage{mathrsfs}
\usepackage[english]{babel} 
\usepackage{enumitem}
\usepackage{dsfont}
\usepackage{url}
\usepackage{hyperref}
\usepackage{tikz}

\hypersetup{linktocpage=true, colorlinks=true, breaklinks=true, linkcolor=blue, menucolor=black, urlcolor=black,citecolor=blue,filecolor=green}

\theoremstyle{plain}
\newtheorem{theorem}{Theorem}[section]
\newtheorem{proposition}[theorem]{Proposition}
\newtheorem{corollary}[theorem]{Corollary}
\newtheorem{lemma}[theorem]{Lemma}

\theoremstyle{definition}

\newtheorem{remark}[theorem]{Remark}

\newtheorem{assumption}[theorem]{Assumption}

\numberwithin{equation}{section}

\newcommand{\nn}{\nonumber}

\DeclareMathSymbol{\leqslant}{\mathalpha}{AMSa}{"36}
\DeclareMathSymbol{\geqslant}{\mathalpha}{AMSa}{"3E}
\DeclareMathSymbol{\doteqdot}{\mathalpha}{AMSa}{"2B}
\DeclareMathSymbol{\circlearrowright}{\mathalpha}{AMSa}{"08}
\DeclareMathSymbol{\subsetneq}{\mathalpha}{AMSb}{"28}
\DeclareMathSymbol{\supsetneq}{\mathalpha}{AMSb}{"29}
\renewcommand{\leq}{\;\leqslant\;}

\newcommand{\e}[1]{\,{\rm e}^{#1}\,}

\DeclareMathOperator*{\supp}{\text{supp}}

\newcommand{\upchi}{\raise 2pt \hbox{$\chi$}}

\newcommand{\caF}{{\mathcal F}}

\newcommand{\caN}{{\mathcal N}}

\newcommand{\caP}{{\mathcal P}}

\newcommand{\caS}{{\mathcal S}}

\newcommand{\bbE}{{\mathbb E}}

\newcommand{\bbN}{{\mathbb N}}

\newcommand{\bbP}{{\mathbb P}}

\newcommand{\bbR}{{\mathbb R}}

\newcommand{\bbZ}{{\mathbb Z}}

\newcommand{\de}{{\mathrm{d}}}
\newcommand{\one}{{\mathbbm{1}}}
\newcommand{\bbPh}{\hat{\bbP}}
\newcommand{\bbPw}{\Tilde{\bbP}}

\def\lt{\left}
\def\rt{\right}

\title{On Anti-Confinement Estimates for Self-Repelling Random Walks}
\author{Tobias Schmidt}
\address{Department of Mathematics, TU Darmstadt, Germany}
\email{tobias.schmidt@tu-darmstadt.de}

\author{Mark Sellke}
\address{Department of Statistics, Harvard University, Cambridge, MA, USA}
\email{msellke@fas.harvard.edu}
\date{\today}
\begin{document}
\maketitle

\begin{abstract}
    We study a class of $d$-dimensional random walks, including the two-dimensional simple random walk, reweighted by a self-repelling Gibbsian pair potential. We prove lower bounds on the diffusion
    constant for short-range interactions, and superdiffusive behavior in case the
    interaction is sufficiently long-range. Finally, we show that in the superdiffusive regime, faster temporal decay can be compensated by stronger spatial repulsion and vice-versa.
    Our technique combines GKS-based correlation inequalities on path space with recursive multi-scale estimates.
\end{abstract}

\section{Introduction}
The main object of interest in this work is a discrete-time random walk $\bbP$ in $d \in \bbN$ dimensions which is perturbed by a pair potential $W$. Let $x$ be a random walk trajectory, and let $(\phi_j)_{j \le T}$ be a family of i.i.d. one-step increments, whose law is given by $\eta$. Throughout, $\eta$ will be assumed to be a product law, meaning that $\phi = (\phi^1,...,\phi^d) \in \bbR^d$ has law
\[
\eta (\de \phi) = \prod \limits_{i=1} ^d \nu(\de \phi^i).
\]
Therefore, $\nu$ can be interpreted as the increment law of a $1$-dimensional random walk and $\eta$ is the law of a random walk where each coordinate evolves independently. In principle, we could also allow each dimension $j \le d$ to evolve according to a different increment law $\nu_j$, but choose it the same in each dimension for convenience.
We assume that $\nu$ is symmetric (i.e. $\nu(-\de \omega) = \nu (\de \omega)$) and that $\supp(\nu)$ is compact. We will refer to such random walks as product walks in all that follows.
Taking these definitions, it holds that
$$x = (x_0,x_1,...,x_T) \stackrel{d}{=} \Big(0,\phi_1, \phi_1 + \phi_2,...,\sum\limits_{j=1} ^T \phi_j \Big).$$
With this notation at hand, we are interested in properties of the probability measure
\begin{equation}
    \label{equ:perturbed_rw}
    \begin{split}
        \hat{\bbP}_{\alpha,T}(\de x) &=  \frac{1}{Z_{\alpha,T}} \exp \lt( \alpha \sum\limits_{0 \le i< j \le T} W( x_j - x_i , j-i ) \rt) \bbP(\de x) \\
        & = \frac{1}{Z_{\alpha,T}} \exp \lt( \alpha \sum\limits_{0 \le i< j \le T} W \lt( \sum\limits_{k = i+1} ^j \phi_k , j-i  \rt) \rt) \prod\limits_{k=1} ^T \eta (\de \phi_k).
    \end{split}
\end{equation}
These measures arise naturally when studying a (discrete) quantum particle coupled to a bosonic field. While the law in this case would be a continuous time random walk, we will be content with studying the discrete analogue in this paper. For intuition regarding the effect of $W$ on the walk, consider 
\[
W(z,t) = s(\Vert z \Vert) \rho(t).
\]
In the above $\rho: \bbN \rightarrow \bbR_{\ge 0}$ should be thought of as decreasing and $s : \bbR \rightarrow \bbR$ is either decreasing or increasing. $s$ \textit{decreasing} intuitively means that trajectories with small increments $x_j - x_i$ are favorable. On the other hand, $s$ \textit{increasing} is supposed to reward the path for not returning to previously seen locations. The time-scale at which this self-interaction is relevant is determined by the time-decay $\rho$ and the entire self-interaction is weighted by the coupling constant $\alpha > 0$. 

The bulk of the literature on measures of the form \eqref{equ:perturbed_rw} considers $W$ which are self-attracting; see for example \cite{BrySla95, Bolthausen1997, Bo02,OsSp99,BePo23,BaMuSeVa23,Se24,BeSchSe25,BeSchSe24}. Of interest are in particular the mean-square displacement $ \Vert x_T \Vert^2$ under $\bbPh_{\alpha,T}$, which is expected to depend inversely on the coupling constant $\alpha$ \cite{BePo23,BaMuSeVa23,Se24,BeSchSe25}. From a physical point of view, the asymptotic mean-square displacement is a quantity that describes how heavy a particle appears to be when coupled to a polarizable medium \cite{Sp87,DySp20}. 
In this work we instead take $W$ to be self-repelling, which intuitively leads to a mean-square displacement that increases with $\alpha$. 

Next, we take a closer look at the time-decay $\rho$. If $\rho(j) = 0$ for all $j \ge 2$
, then it is easily seen that $\bbPh_{\alpha,T} = \bbP$. If one has at least $\rho(2) > 0$, then the measure $\bbPh_{\alpha,T}$ introduces positive correlation between two neighboring random walk steps $\phi_i$ and $\phi_{i+1}$, which suffices to show that the diffusion constant depends positively on $\alpha$. On the other hand, consider a time decay like $\rho(t) \approx t^{-\xi}$ for some $\xi > 0$. If $\xi$ is sufficiently small, one is led to think that the random walk moves quite fast on large time-scales. It will be shown that long-range interactions can indeed lead to \textit{superdiffusive} behavior, meaning that the process has mean-square displacement which grows faster than linear in $T$.

Finally, let us note that on a heuristic level, there is monotonicity in the time-decay: consider once again an interaction of product form $W(z,t) = s(\Vert z \Vert) \rho(t)$. Let us take $\theta(t) \ge \rho(t)$ and set $W'(z,t) = s( \Vert z \Vert) \theta(t)$. It should follow that a measure reweighted by the interaction $W'$ diffuses more than a random walk reweighted by $W$. In a similar fashion, monotonic behavior in the spatial interaction is expected. While such monotonicity statements are typically difficult to verify rigorously, they can be established using the correlation inequalities introduced in this work; we refer to Lemma \ref{lemma:omitting_interactions} for a more general formulation. For a related monotonicity statement, see also Remark \ref{rem:density_replacement}.

One can also ask what happens if $s$ is replaced by a weaker spatial interaction, but $\rho$ is also made weaker in the sense that longer interactions matter more. Following this line of thought, one is led to believe that a time-decay resulting in superdiffusive behavior depends on the spatial interaction in the sense that ``stronger'' spatial repulsion allows for faster time-decay. We will show this for the special class of $W$ that we consider; see Theorem \ref{thm:long_range_superdiffusive_behavior} for the precise statement.

Let us now introduce the class of $W$ that we can treat. In what follows, it will be convenient to interpret the steps of the random walk as \textit{spins} on the lattice $\bbN$. The main requirement for all $W$ that we study is that interactions between two spins $\phi_m$ and $\phi_n$ are sums of products of powers of their coordinates. I.e., terms of the form 
\[
\sum_{i,j} \sum\limits_{1\leq k,l \leq d} c_{i,j} (\phi_m ^k)^i (\phi_n ^l)^j
\]
with $c_{i,j} \ge 0$ determine the correlation between two spin--coordinates, which allows us to apply correlation inequalities first introduced in \cite{gi70}. The \textit{GKS inequality} that will be used heavily in what follows is a slight reinterpretation of a well-known correlation inequality (e.g. \cite[Theorem 12.1]{simon79}), which has already been applied in similar works (e.g. \cite{OsSp99}). For a brief overview of other applications we refer to \cite[Chapter 12]{simon79}. Let us finally state our technical assumptions on the pair potential $W$.

\begin{assumption}
    \label{ass:short_range}
    For all $t \in \bbN$
    and for $z= (z^1,...,z^d) \in \bbR^d$ 
    it holds that
    $$W(z,t) = \sum\limits_{i=1} ^{n(t)} c_{i,t} \lt( \sum\limits_{p=1} ^d (z^p) ^q \rt)^i,$$
    where $c_{i,t} \ge 0$ for all $1 \le i \le n(t)$, and where $q \in \bbN$ is fixed.
    Moreover, $c_{i,t} > 0$ for some $i \in \bbN$ and $t = 2$.
\end{assumption}
Specializing to $q=2$ and $n(t) \equiv 1$ recovers the interaction
$W(z,t) = c_{1,t} \Vert z \Vert ^2$. Similarly, by choosing $q=2$ and $i$ arbitrary, even polynomials as a function of the euclidean norm are included in the interactions we allow. 
We also want to note that a similar class of functions has been considered in \cite{OsSp99}. In their work, however, Osada and Spohn consider negative coupling constants, which leads to a self-attracting pair potential. Moreover, they also consider perturbed Brownian motions instead of (simple) random walks. 

\begin{remark}
    One interesting example of $\bbP$ that we can treat is the two--dimensional simple random walk, at least if $q=2$. While this random walk is not naturally of product form, we recall that a rotation of space by $45$ degrees yields two i.i.d. processes. 
    Similar arguments were carried out in \cite{PrSaSc08}. As in our setup, their result should be extendable to $d \ge 3$, but it is non-obvious how to do this, since in higher dimension, the simple random walk is not (a rotation of) a product walk anymore.
\end{remark}

\begin{remark}
     In case the base measure is a random walk with fixed step size $a > 0$ (which we denote by $\bbP_a$), we can also treat negative $c_{i,t}$ but require that every $p$-spin interaction (meaning the product of $p$ distinct spins) has non-negative sign. In other words, after multiplying out all polynomials, the pair potential must give rise to interactions of the form
     $$\sum\limits_{I \subseteq [T]} c_I \prod\limits_{i \in I} \prod\limits_{j=1} ^d (\phi_i ^j)  ^{ k_{i,j}}$$
     with $c_I \ge 0$ for all $I \subseteq [T]$ and $k_{i,j} \in \bbN$.
     Consider e.g. $W(z,t) = \rho(t) ( \delta z^2 + \beta z^4)$ with $\rho \ge 0$. By writing $\bbP_a \lt((x_n)_{n \ge 0} \rt) \stackrel{d}{=} \bbP_1 \lt( (ax_n)_{n \ge 0} \rt)$, we find that the interaction changes to  
    $$W(z,t) = \rho(t) \lt( a^2 \delta \Vert z \Vert^2 + \beta a^4 \Vert z\Vert^4 \rt)$$
    with respect to the simple random walk.
    Multiplying out all polynomials shows that $\beta a^2 > \delta$ is sufficient to guarantee that all $p$-spin interactions have non-negative sign, which is what is required to apply the GKS inequality. In that sense, Assumption \ref{ass:short_range} can be weakened to a broader class of functions in the space coordinate.
\end{remark}
We now state our first result for (potentially) short-range interactions. 
\begin{theorem}
    \label{thm:short_range}
    Let $\bbPh_{\alpha,T}$ obey Assumption \ref{ass:short_range}. Assume that there exists an even $i\in\bbN$ such that $c_{i,2}>0$. Set $\zeta := c_{i,2}$. Then, there exist constants $C>0$, $a >0$ such that for all $\alpha \ge 1$
    $$\limsup\limits_{T \to \infty} \frac {1} {dT} \bbE^{\bbPh_{\alpha,T}} [ \Vert x_T \Vert^2] \ge C \e{a\zeta\alpha}.$$
\end{theorem}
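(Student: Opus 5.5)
Since the interaction has only nonnegative coefficients on products of coordinate powers, I would prove the theorem with the GKS inequality. The idea is to lower-bound the reweighted measure by a simpler one in which only nearest-neighbour interactions of a single monomial type survive, and then compute that one-dimensional chain explicitly.

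\textbf{Step 1: expand the second moment and reduce to one coordinate.} Write $\Vert x_T\Vert^2=\sum_{p=1}^d\bigl(\sum_k\phi^p_k\bigr)^2$. By symmetry between coordinates it suffices to treat $p=1$:
\[
\bbE^{\bbPh_{\alpha,T}}\Bigl[\bigl(\textstyle\sum_k\phi_k^1\bigr)^2\Bigr]=\sum_{k,l}\bbE^{\bbPh_{\alpha,T}}[\phi^1_k\phi^1_l].
\]
Expanding $W$ by the multinomial theorem gives a Hamiltonian that is a nonnegative combination of monomials in the spin coordinates. The single-spin measure $\nu$ is symmetric, so the GKS inequality applies. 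Its first consequence is $\bbE[\phi_k^1\phi_l^1]\ge 0$ for all $k,l$, so off-diagonal terms can be discarded at will. Its second consequence is monotonicity: decreasing any coefficient lowers every correlation. This is the mechanism behind Lemma \ref{lemma:omitting_interactions}.

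\textbf{Step 2: drop interactions to get a Markov chain.} Use the monotonicity to delete every term except the part of $\alpha c_{i,2}\bigl(\sum_p(\phi^p_{j}+\phi^p_{j+1})^q\bigr)^i$ that involves only the first coordinate. Within that part, keep only the mixed monomial in $\phi^1_j$ and $\phi^1_{j+1}$ that is best suited to producing alignment. With $i$ even and $q\in\bbN$, expanding $(\phi_j+\phi_{j+1})^{qi}$ yields a term $c\,(\phi_j^1)^{m}(\phi_{j+1}^1)^{m}$ with $m$ odd and $c>0$. When $q$ is odd, one can take $m=qi/2$ odd or choose a neighbouring odd pair $(m,m')$; when $q$ is even, one works with the pure powers $(\phi^1_j)^{qi}$ instead. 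What remains is a one-dimensional nearest-neighbour chain with weights $\exp\bigl(\alpha\zeta c\sum_j f(\phi_j)f(\phi_{j+1})\bigr)$ for some odd function $f$, in particular $f(\phi)=\phi^m$. GKS guarantees that every correlation under this chain is at most the corresponding correlation under $\bbPh_{\alpha,T}$.

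\textbf{Step 3: compute the diffusion constant of the chain.} Fix $\epsilon$ such that $\nu$ has mass $\delta>0$ on $\{|\phi|\ge\epsilon\}$. Restricting to those values, the pair interaction energy $\alpha\zeta c\,\epsilon^{2m}$ favours equal signs of consecutive spins. A transfer-operator or renewal argument then shows that the sign of $\phi_j$ flips only at rate $\lesssim \delta^{-1}e^{-a\zeta\alpha}$. Consequently $\sum_l\bbE[\phi_k\phi_l]\gtrsim e^{a\zeta\alpha}$ uniformly in $k$ away from the boundary, which gives $\limsup_T T^{-1}\bbE[(x_T^1)^2]\ge Ce^{a\zeta\alpha}$. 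To avoid handling small values of $|\phi|$, GKS can also be applied in the single-spin direction: conditioning on $|\phi|$ and taking $\nu$ as a mixture of symmetric two-point laws reduces the chain to an Ising chain with random couplings $J_j\ge 0$. For such a chain, $\bbE[\sigma_k\sigma_l]=\prod\tanh J$, with positive factors, and summing gives the bound.

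\textbf{Main obstacle.} The hard part is Step 2 combined with the randomness of $|\phi|$ in Step 3. One must show that some monomial with both exponents odd actually survives with a positive coefficient, and that the averaged Ising correlations $\bbE\prod_j\tanh(\alpha\zeta c|\phi_j|^m|\phi_{j+1}|^m)$ sum to something exponentially large in $\alpha$. The latter holds because each factor is at least $\tanh(\alpha\zeta c\epsilon^{2m})\ge 1-2e^{-2\alpha\zeta c\epsilon^{2m}}$ on the good event, but getting this requires care with how the absolute values are reweighted by the interaction. I would handle this with a second GKS comparison, lowering the couplings $J_j$ to their minimum on the good event.
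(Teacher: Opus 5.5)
Your Steps 1--2 follow the paper's reduction (Corollary~\ref{cor:1d_suffices} and the nearest-neighbour measure $\bbPw_{\beta,T}$ of Remark~\ref{rem:density_replacement}). The gap is in Step~3, which is where the factor $\e{a\zeta\alpha}$ has to come from, and which is not actually proved. The transfer-operator/renewal bound on the flip rate is only asserted. Moreover, ``restricting to'' $\{|\phi|\ge\epsilon\}$ is not a comparison you can use for a lower bound: replacing $\nu$ by its restriction enlarges the spins and, if anything, raises correlations.

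In the conditioned version, the formula $\bbE[\phi_k\phi_l\mid r]=r_kr_l\prod_{j=k}^{l-1}\tanh J_j$ is correct, but then everything hinges on the law of the magnitudes $r_j=|\phi_j|$ under the interacting measure. You need the event $\{r_j\ge\epsilon\ \forall j\in B\}$ to have probability bounded below on blocks $B$ of length of order $\e{a\zeta\alpha}$, whereas under the free law this probability is $\delta^{|B|}$. ``Lowering the couplings to their minimum on the good event'' only bounds the $\tanh$ factors on that event. It says nothing about the event's probability, so the argument as written does not close.

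The paper avoids the magnitude issue with Wells' inequality (Theorem~\ref{thm:wells_inequality}), which reduces to spins $\pm a$. It then uses a GKS product bound instead of any transfer-operator analysis: $\bbPw_{\beta,T}(\phi_i=\dots=\phi_{i+L-1})\ge\prod_j\bbPw_{\beta,T}(\phi_j=\phi_{j+1})\ge(1-4\e{-4\beta})^{L-1}$ with $L=\e{4\beta}$, where each factor comes from a two-spin computation. It finishes by splitting $[1,T]$ into blocks of length $L$ and discarding the nonnegative cross-block correlations.

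If you prefer to keep general $\nu$, the same mechanism repairs your gap. Both $\one_{\{|x|\ge\epsilon\}}$ and $\operatorname{sgn}$ lie in $\caF_0$. So by GKS, the probability that all spins of $B$ have modulus at least $\epsilon$ and a common sign is at least $\prod_{j}\bbP(|\phi_j|\ge\epsilon)\prod_j\tfrac12\bigl(1+\bbE[\operatorname{sgn}\phi_j\operatorname{sgn}\phi_{j+1}]\bigr)$. Keeping a single bond (Lemma~\ref{lemma:omitting_interactions}) shows that each factor is $1-O(\e{-\kappa\zeta\alpha})$, once $\epsilon$ is small compared with a level $\epsilon_1$ at which $\nu(|x|\ge\epsilon_1)>0$.

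Separately, fix the case split in Step~2. Single-site powers $(\phi^1_j)^{qi}$ couple nothing and cannot create alignment. Since $qi$ is even for every $q$, always keep a mixed monomial $(\phi^1_j)^{m}(\phi^1_{j+1})^{qi-m}$ with $m$ odd (e.g.\ $m=1$); its coefficient $\binom{qi}{m}$ is positive.
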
 

\begin{remark}
    \label{rem:s.r.w.suffices}
    The assumption that $c_{i,2} > 0$ for some $i \in \bbN$ is not a restriction. Let us assume that there exists a natural number $j > 2$ so that $c_{i,j} > 0$. Define the spins
    $$\Tilde{\phi}_l := \begin{cases}
        \sum\limits_{k= j \lfloor l/2 \rfloor +1  } ^{j \lfloor l/2 \rfloor  + \lceil j/2 \rceil} \phi_k \text{ if $l$ odd,} \\
        \sum\limits_{k= j \lfloor (l-1)/2 \rfloor + 1 + \lceil j/2 \rceil  } ^{ j \lfloor (l-1)/2 \rfloor  + j } \phi_k \text{ if $l$ even.}
    \end{cases}$$
    This formula regroups the spins on an interval of length $j$ into two disjoint sums. Consider for simplicity the first interval  $[0,j]$. Then $\Tilde{\phi}_1 = \sum_{k=1} ^{\lceil j/2 \rceil} \phi_k$, $\Tilde{\phi}_2 = \sum_{k=1 + \lceil j/2 \rceil} ^{j} \phi_k$ \footnote{One requires this slightly unhandy formula because  $j$ can be odd. If $j$ is even, then we simply split the spins $1,...,j$ into two groups of even size, namely $1,...,j/2$ and $j/2 +1,...,j$.}. 
    It is then possible to apply Wells' inequality (see Theorem \ref{thm:wells_inequality}), which allows us to renormalize $\Tilde{\phi_l}$ to $a X_l$, where $a > 0$ and $X_l$ is Rademacher distributed. In other words, it suffices to treat the case $t=2$.
\end{remark}

The previous statement is only interesting if even polynomials have non-zero coefficients. If an odd-power interaction has weight greater than $0$, the following can be deduced.
\begin{proposition}
    \label{prop:odd_interaction}
    Let $\bbPh_{\alpha,T}$ obey Assumption \ref{ass:short_range}. Assume that $q$ is odd and that there exists an odd $\ell\in\bbN$ with $c_{\ell,2}>0$.
    Then, for every $T \in \bbN$, some constant $c>0$ and $1 \le i \le d$
    $$\bbE^{\bbPh_{\alpha,T}} [ x_T ^i ] \ge cT.$$
    In particular, the process is ballistic, meaning 
    $$\bbE^{\bbPh_{\alpha,T}} [ \Vert x_T \Vert ^2 ] \ge c^2 T^2.$$
    
\end{proposition}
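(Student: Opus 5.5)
We prove $\bbE^{\bbPh_{\alpha,T}}[\phi_k^i]\ge c$ for each single step $k$ and then sum over $k$ by linearity, which gives $\bbE^{\bbPh_{\alpha,T}}[x_T^i]\ge cT$. The ballistic statement then follows from Jensen's inequality:
\[
\bbE^{\bbPh_{\alpha,T}}[\Vert x_T\Vert^2]\ge \bbE^{\bbPh_{\alpha,T}}[(x_T^i)^2]\ge (\bbE^{\bbPh_{\alpha,T}}[x_T^i])^2\ge c^2T^2 .
\]
So the whole task is a lower bound on one-spin expectations, and the natural tool is the GKS inequality applied on path space, viewing the coordinates $\phi_k^p$ as spins.

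First I expand the Hamiltonian. By Assumption \ref{ass:short_range}, each term $W(x_j-x_i,j-i)$ is a nonnegative combination of powers $\bigl(\sum_p (\sum_{k=i+1}^j\phi_k^p)^q\bigr)^m$. Multiplying out, the exponent $\alpha\sum_{i<j}W$ becomes a polynomial in the coordinates $\phi_k^p$ with \emph{nonnegative} coefficients. This is exactly the ferromagnetic form required by the GKS inequality, and the a priori measure $\eta^{\otimes T}$ is a product of symmetric single-spin laws. The first GKS inequality therefore gives $\langle \phi_k^i\rangle\ge 0$. The second GKS inequality, used as monotonicity in the coupling constants, lets me delete every monomial of the Hamiltonian except a chosen one; this can only decrease $\langle\phi_k^i\rangle$. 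This is the same mechanism as Lemma \ref{lemma:omitting_interactions}.

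Next I choose which monomials to keep. For $t=2$ and the odd $\ell$ with $c_{\ell,2}>0$, the term $c_{\ell,2}\bigl(\sum_p(\phi_k^p+\phi_{k+1}^p)^q\bigr)^\ell$ contains, after expansion, the monomial $(\phi_k^i)^{q\ell}$ with a positive coefficient (take $p=i$ throughout and all factors equal to $\phi_k^i$). Since $q$ and $\ell$ are both odd, $q\ell$ is odd. For each $k$, at least one of the pairs $(k-1,k)$ or $(k,k+1)$ lies in $[1,T]$, so this single-site odd field is present. After dropping all other interactions, the measure factorizes, and
\[
\langle\phi_k^i\rangle\ge \frac{\int \omega\, e^{\beta\omega^{q\ell}}\,\nu(\de\omega)}{\int e^{\beta\omega^{q\ell}}\,\nu(\de\omega)}=:c>0 ,
\]
where $\beta=\alpha c_{\ell,2}\cdot(\text{multinomial coefficient})>0$. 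The bound is strict because $\omega\mapsto\omega$ and $\omega\mapsto e^{\beta\omega^{q\ell}}$ are both increasing, so by Chebyshev's association inequality and the symmetry of $\nu$ (which gives $\int\omega\,\de\nu=0$) the ratio is positive whenever $\nu\neq\delta_0$. The constant $c$ does not depend on $k$ or $T$. One can avoid the two edge sites by keeping only the pair with the partner inside $[1,T]$, since the coefficient is the same for every pair.

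The main obstacle is justifying GKS for polynomial (non-Ising) spins with a general compactly supported symmetric $\nu$. I would rely on the paper's version of the GKS inequality (cf.\ \cite[Theorem 12.1]{simon79}), which covers products of powers of coordinates under even single-site measures. A second, smaller point is checking that the multinomial expansion really yields only nonnegative coefficients; this holds since all $c_{i,t}\ge0$ and only sums and products appear.
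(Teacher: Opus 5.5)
Your argument is correct and is essentially the paper's: use GKS monotonicity to discard everything except a single-site odd field, check that the resulting one-site tilted mean is positive, sum over $k$, and apply Jensen. The one difference is how the field is obtained. The paper passes to $\pm1$ spins and uses $\phi_k^2=1$ to extract a linear field $\alpha\phi_j$, whereas you keep the odd monomial $(\phi_k^i)^{q\ell}$ under the general symmetric $\nu$. This makes your argument self-contained for general $\nu$, since the Wells' inequality quoted in the paper only compares second moments. One caveat applies to both arguments: they tacitly need $T\ge2$, because for $T=1$ there is no $t=2$ interaction. So your claim that every site $k$ belongs to some adjacent pair fails at $T=1$.
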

The previous result is not surprising, because odd-power interactions are genuinely increasing and not symmetric, meaning they induce a tilt for each spin separately. In other words, the expectation of single spin sites can be verified to be greater than $0$ in this case.

As previously alluded to, our second result concerns long-range interactions. We are interested in finding a regime of superdiffusive behavior. In order to do this, we will study the following path measures.
Define for $\alpha,\gamma,\xi > 0$ the measure
\begin{equation}
    \label{equ:long_range_rw}
    \begin{split}
        \hat{\bbP}_{\alpha,T,\gamma,\xi}(\de x) &=  \frac{1}{Z_{\alpha,T,\gamma,\xi}} \exp \lt( \alpha \sum\limits_{0 \le i< j \le T} \frac{ \Vert x_j - x_i \Vert^\gamma}{|j-i|^{\xi}} \rt) \bbP(\de x) .
    \end{split}
\end{equation}

\begin{theorem}
    \label{thm:long_range_superdiffusive_behavior}
    Let $\gamma \in 2\bbN$ and let $c_{\mathrm{crit}}= \log_2(1+\tanh(2))$. Take $\xi = \gamma/2 + c$ with $c \in (0,c_{\mathrm{crit}})$. Then, with $\alpha_* (c) = 2^c \mathrm{arctanh}(2^c - 1)$ and some constant $C>0$ it holds
    for all $\alpha \ge \alpha_* (c)$ that
    $$\bbE^{\bbPh_{\alpha,T,\gamma,\xi}}[ \Vert x_T \Vert ^2] \ge C T^{1+ \log_2(1+ \tanh(2))}.$$
\end{theorem}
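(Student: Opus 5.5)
The plan is a recursive multi-scale argument on dyadic blocks, driven by the GKS inequality and Wells' inequality as in Remark \ref{rem:s.r.w.suffices}. Take $T=2^n$ and split $[1,T]$ into dyadic blocks. At level $k$ the block spins are $\Phi^{(k)}_l=\sum_{m\in B_{k,l}}\phi_m$ with $|B_{k,l}|=2^k$, and each level-$(k+1)$ block is the sum of two level-$k$ blocks. Expanding $\Vert x_j-x_i\Vert^\gamma$ with $\gamma\in 2\bbN$ gives a polynomial in the spin coordinates with nonnegative coefficients on every monomial. The GKS inequality therefore says that the quantities we want to bound from below, such as $\bbE[\Phi^{(k)}_{2l-1,p}\Phi^{(k)}_{2l,p}]$ or second moments of block spins, only decrease when interactions are dropped. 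This is the content of Lemma \ref{lemma:omitting_interactions}. At each scale we keep only the interaction between the two halves of a level-$(k+1)$ block, taken with the pair $(i,j)$ equal to the endpoints, for which $j-i\approx 2^{k+1}$. We also keep only the quadratic cross term of $\Vert \Phi_1+\Phi_2\Vert^\gamma$, and we discard everything else.

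The recursion goes as follows. Let $\sigma_k^2$ denote a lower bound on the second moment of a level-$k$ block coordinate, normalised so the block is compared with $\sigma_k X$, where $X$ is Rademacher. Wells' inequality (Theorem \ref{thm:wells_inequality}) lets us replace each block spin by such a renormalised Rademacher spin in the GKS comparison, so the lower bound survives the replacement. Two Rademacher spins $\sigma_k X_1$ and $\sigma_k X_2$ coupled through $\alpha\,\Vert\sigma_k(X_1+X_2)\Vert^\gamma/2^{(k+1)\xi}$ have an effective ferromagnetic coupling of order $J_k=\alpha\sigma_k^\gamma 2^{-(k+1)\xi}$, up to constants depending on $\gamma$. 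Their correlation is then at least $\tanh(J_k)$, which gives
\[
\sigma_{k+1}^2\ \ge\ 2\sigma_k^2\bigl(1+\tanh(J_k)\bigr).
\]
Under the inductive hypothesis $\sigma_k^2\gtrsim 2^{k(1+\beta)}$ we get $J_k\approx \alpha 2^{k(\gamma/2)(1+\beta)-(k+1)(\gamma/2+c)}$. This quantity stays bounded below uniformly in $k$ exactly when $\beta\gamma/2\ge c$. Concretely, $\xi=\gamma/2+c$ balances the growth $\sigma_k^\gamma\sim 2^{k\gamma/2}2^{k\gamma\beta/2}$ against the decay $2^{-k\xi}$.

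To close the induction we aim for a fixed point: we want $J_k\ge 2$ for all $k$, so that $\sigma_{k+1}^2\ge 2(1+\tanh 2)\sigma_k^2$. Iterating over $n$ levels then gives $\bbE\Vert x_T\Vert^2\ge C\,2^{n(1+\log_2(1+\tanh 2))}$, which is the claimed exponent. Maintaining $J_k\ge 2$ requires the per-level growth factor of $\sigma^\gamma$ to beat $2^{\xi}$, and this is where $c<c_{\mathrm{crit}}=\log_2(1+\tanh 2)$ enters. The base case, and the threshold on $\alpha$, come from level $0$ and level $1$. With the normalisation of the statement the first step requires $\alpha 2^{-c}\ge \mathrm{arctanh}(2^c-1)$, which is the condition $\alpha\ge\alpha_*(c)$. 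The point is that the correlation must satisfy $\tanh(\cdot)\ge 2^c-1$ so that the growth $1+\tanh(J)\ge 2^c$ compensates the extra decay $2^{-c}$ per level. I would first set up the induction hypothesis as $J_k\ge J_0$ and verify that it propagates, using monotonicity of $\tanh$.

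The main obstacle is making the renormalisation step rigorous. After conditioning, or after applying GKS, the block spin $\Phi^{(k)}$ is not Rademacher and its law depends on all the finer-scale interactions. We need Wells' inequality to legitimately replace it by $\sigma_k X$ with $\sigma_k^2$ equal to (a lower bound on) its second moment, and to do so simultaneously for both halves and across all $d$ coordinates. The $\gamma$-th power of the sum must also be reduced to an effective two-spin ferromagnetic coupling without losing more than constants. I would handle this by proving the inequality for the hierarchical model that keeps only the dyadic interactions, where the blocks are independent across halves. There I would apply Wells' inequality at each level, and then transfer the result to the full model via Lemma \ref{lemma:omitting_interactions}.
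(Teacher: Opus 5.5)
Your architecture matches the paper's. You use Lemma \ref{lemma:omitting_interactions} to drop every cross interaction between the two halves of a dyadic block except the endpoint term. The halves then become i.i.d.\ symmetric block variables with the previous-scale law $\theta$, coupled by a single tilt. You then derive $V_{k+1}\ge(1+\tanh(\cdot))V_k$ for the normalised second moments and iterate as in Lemma \ref{lemma:Vn_growth}.

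\textbf{The gap.} It is exactly the step you call the main obstacle, and Wells' inequality does not close it. Theorem \ref{thm:wells_inequality} only gives \emph{some} $a>0$ for which the Rademacher law on $\pm a$ is a lower comparison. It gives no relation between $a^2$ and $V=\int y^2\,\theta(\de y)$. The paper uses it only to pass from $\nu$ to a $\pm a$ walk, where only the existence of $a$ matters.

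Your recursion $\sigma_{k+1}^2\ge 2\sigma_k^2(1+\tanh J_k)$, with $\sigma_k^2$ the true second moment of the block, needs a lossless comparison at every level. A loss $a^2=\kappa V$ with $\kappa<1$ compounds across levels. You would get $V_{k+1}\ge(1+\kappa\tanh(\kappa\beta V_k))V_k$, hence at best the exponent $1+\log_2(1+\kappa\tanh 2)$. You would get nothing at all unless $\kappa$ is uniform in $k$, and the block laws change with $k$. So neither the exponent nor $\alpha_*(c)$ follows from your argument.

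\textbf{The missing idea.} It is Lemma \ref{lemma:AB-tanh_bound}. For \emph{every} symmetric, finitely supported $\theta$ with second moment $V$ and $\beta V\le 2$, the tilt $e^{\beta yz}\theta(\de y)\theta(\de z)$ satisfies $\bbE[yz]\ge V\tanh(\beta V)$. In other words, the two-point law at $\pm\sqrt V$ is the worst case.

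The paper proves this as follows:
\begin{enumerate}
\item Regard $S=yz$ as a symmetric variable with $\bbE S^2=V^2$.
\item Reduce to laws with at most four atoms via extreme points.
\item Apply Jensen to $t\mapsto k(\sqrt t)$, where $k(x)=x\sinh(\beta x)-V\tanh(\beta V)\cosh(\beta x)$. This map is convex as long as $2-\beta V\tanh(\beta V)>0$.
\end{enumerate}
That convexity condition is enforced by lowering the coupling through Lemma \ref{lemma:omitting_interactions}. It is the source of the $\min(\cdot,2)$ in the recursion, and hence of $c_{\mathrm{crit}}=\log_2(1+\tanh 2)$. In your scheme the number $2$ has no origin: an exact Rademacher comparison would let $\tanh J_k\to 1$. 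So ``aim for $J_k\ge 2$'' reverse-engineers the constant rather than deriving it.

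\textbf{A related inconsistency.} Your coupling $J_k\propto\alpha\sigma_k^\gamma 2^{-(k+1)\xi}$ contradicts your own decision to keep only the quadratic cross term. Keeping only that term gives $J_k\asymp\alpha 2^{-ck}V_k$. The paper makes this reduction first, via Proposition \ref{prop:gamma-square-polynomial-positivity} and Lemma \ref{lemma:gamma_two_suffices}. Only then does the condition $2^{-c}(1+\tanh 2)>1$, i.e.\ $c<c_{\mathrm{crit}}$, come out. Your $\sigma_k^\gamma$ scaling would instead give $(1+\tanh 2)^{\gamma/2}>2^c$, so your stated origin of $c_{\mathrm{crit}}$ does not follow from your own formulas.
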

Put in words, we show superdiffusive behavior for fixed $\gamma$ and $\xi = \gamma/2 + c$, where $c <c_{\mathrm{crit}}\approx 0.973$. Setting $\gamma =2$ and considering the simple random walk in $d=1$ resembles a model similar to Dyson's hierarchical model at inverse temperature $\alpha$ with time decay $\approx 1/ |i-j|^{\xi-2}$ \cite{Dyson1969,PrSaSc08}. In this model it is known that long--range order exists for a time decay $\xi < 4$ and $\alpha$ large enough. We therefore expect that the restriction to $c < \log_2 ( 1+\tanh(2))$ instead of $c \le 3$ to be an artifact of our approach. One can nevertheless ask for the correct scaling of $\xi$ in terms of $\gamma$. Here, the following heuristic can be helpful.

Let us consider the case $d=1$, $q=1$ and write
\[
    x_n=\sum_{m=1}^n \phi_m,\qquad \phi_m\in\{\pm1\},
\]
so that for  $t\ge1$,
\[
    x_{i+t}-x_i = \sum_{m=i+1}^{i+t}\phi_m =: S_{i,t}.
\]
For the exponent $\gamma=2k$ the self--interaction of the path is therefore built out of the terms $S_{i,t}^{2k}$. We now consider only $t > k$, because long--range interactions are expected to determine the behavior of the system.
Expanding $S_{0,t}^{2k}=(\sum_{m=1}^t \phi_m)^{2k}$ and using $\phi_m^2=1$, the coefficient of a fixed two--body interaction $\phi_a\phi_b$ ($a\neq b$) is obtained by counting monomials in which $a$ and $b$ appear an odd number of times and all other indices appear an even number of times.  The leading contribution comes from choosing $k-1$ additional distinct indices each occurring twice; basic combinatorics
shows that the pair coefficient scales as $t^{k-1}$, meaning
\begin{equation}
    \label{eq:pair-coeff-heuristic}
    S_{0,t}^{2k}
    \;=\; A_k(t)
    \;+\; C_k\, t^{k-1}\!\sum_{1\le a<b\le t}\phi_a\phi_b
    \;+\; \text{(higher--body spin terms)}
    \;+\; O(t^{k-2})\!\sum_{a<b}\phi_a\phi_b,
\end{equation}
for constants $C_k>0$ (and some scalar term $A_k(t)$) independent of $t$.

Fix two spins $\phi_m,\phi_n$ at distance $r =|m-n|$. For each $t\ge r$, the number of blocks of length $t$ that contain both $m$ and $n$ is asymptotically $(t-r)$; hence, the induced coupling at distance $r$ behaves as
\begin{equation}
    \label{eq:Jr-heuristic}
    \sum_{t\ge r} (t-r)\,\frac{t^{k-1}}{t^\xi}
    \;\asymp\;
    \sum_{t\ge r} t^{k-\xi}
    \;\asymp\;
    r^{-(\xi-k-1)} ,
\end{equation}
whenever $\xi>k+1$.
Thus, at the level of two body interactions, the system behaves like a one--dimensional long--range ferromagnet with tail $\sim r^{-s}$, where $s= \xi-\frac{\gamma}{2}-1$. In the quadratic case $\gamma=2$ this reproduces the well-known Dyson model and yields the expected critical threshold $\xi_c=4$ \cite{PrSaSc08}. One therefore expects that superdiffusive behavior occurs when the effective exponent satisfies $s\le2$, i.e.
$$ \xi < 3+ \frac \gamma 2$$
at sufficiently low temperature. For $\xi>\xi_c(\gamma)$ the induced long--range coupling is expected to be too weak to induce long--range alignment of spins, and diffusive behavior is expected (possibly with a renormalized diffusion constant).
Our approach can therefore verify the correct scaling of $\xi$ in terms of $\gamma$, but is slightly off on the exact threshold for which superdiffusivity is expected to occur. 

In order to understand why our results are most probably not tight, let us briefly sketch our approach. First, we may write
$$x_T = \sum\limits_{i=1} ^{T/2} \phi_i + \sum\limits_{i=T/2 +1} ^{T} \phi_i =: \sigma_\mathrm L + \sigma_\mathrm R.$$
Next, decompose the Gibbs interaction into two disjoint blocks and all interactions between those blocks, i.e.
\begin{equation}
    \nn
    \begin{split}
        \sum\limits_{0 \le i< j \le T} W( x_j - x_i , j-i ) &= \sum\limits_{0 \le i < j \le T/2} W( x_j - x_i , j-i ) + \sum\limits_{T/2 + 1 \le i < j \le T} W( x_j - x_i , j-i )  \\
        &+ \sum_{\substack{
0 \le i \le T/2 \\
T/2 + 1 \le j \le T
}}
 W( x_j - x_i , j-i ).
    \end{split}
\end{equation}
By GKS, we may replace 
the third term by $W(x_T,T)$, omitting all other interactions between the left and the right block. 
 Specializing to $W(z,T) = \Vert z \Vert^\gamma / T^{\gamma/2 + c}$ for $c>0$, one can deduce that $W(x_T,T)$ is more strongly repulsive (in the GKS sense) than $(\sigma_\mathrm L + \sigma_\mathrm R) ^2 /T^{1+c}$ (see Lemma \ref{lemma:gamma_two_suffices}). 
 By ignoring all other interactions between the left block and the right block, $\sigma_\mathrm L$ and $\sigma_\mathrm R$ are independent besides the single interaction between them. This makes it tractable on how much the Gibbs interaction $(\sigma_\mathrm L + \sigma_\mathrm R) ^2 /T^{1+c}$ improves the variance of $x_T$ compared to the case where $\sigma_\mathrm L$ and  $\sigma_\mathrm R$ are independent; we refer to Lemma \ref{lemma:AB-tanh_bound} for the details. The price to pay for this independence is that we ignore a lot of interactions between the blocks, which likely accounts for the discrepancy between the previously stated heuristics and our Theorem \ref{thm:long_range_superdiffusive_behavior}.

\section{Related Work}
The model we study in this paper is essentially a \emph{self-interacting random walk}, also known in statistical mechanics as a polymer measure. In these models, a random walk is reweighted by an interaction term depending on its local time or its increments. A classical question is the existence and qualitative behavior of Gibbs measures relative to a free walk or Brownian motion. Early works such as \cite{Betz2003,BeSp05} established existence criteria and proved central limit theorems for perturbations with sufficiently fast temporal decay. The work~\cite{BrySla95} identified a diffusive phase for weakly self-repelling walks in $d>2$, showing that the perturbation does not destroy Gaussian behavior at large scales. In the context of self-attracting interactions, Donsker--Varadhan~\cite{Donsker1983} computed the free energy in strong-coupling asymptotics for the Fröhlich polaron using their large deviation techniques. In~\cite{Sp87}, the diffusion constant of the reweighted process was heuristically connected to the effective mass of a quantum particle coupled to a scalar quantum field; the follow-up work \cite{DySp20} made this rigorous. More recently, a large body of work has investigated quantitative properties of the diffusion constant, including \cite{BePo23,Se24,BeSchSe25,BeSchSe25b,BaMuSeVa23} via probabilistic methods. Due to the connection discovered in~\cite{Sp87}, the results from \cite{BrSe24,Br24} can also be interpreted as bounds on the diffusion constant.

If the temporal decay of the pair potential $W$ is compactly supported or exponential, the walk remains diffusive: the mean-square displacement satisfies
\[
    \bbE^{\bbPh_{\alpha,T}}[\Vert x_T \Vert^2] \approx \sigma^2 d T,
    \qquad \sigma^2 \neq 1,
\]
with $\sigma^2<1$ in case of an attracting potential and $>1$ in case the potential is repulsive. In this regime, the interaction produces only a finite renormalization of the underlying random walk.
For \emph{attractive} interactions, especially in one or two dimensions, localization (or collapse of the walk) can occur even for short-range potentials. In~\cite{Bolthausen1994} it is proved that a two-dimensional random walk with an attractive potential behaves sub-diffusively. In~\cite{Bolthausen1997}, Bolthausen and Schmock analyzed a general class of $d$-dimensional self-attracting walks; in $d=1$ and under sufficiently strong attraction, the walk clusters in a small region, and $x_T$ converges (when properly rescaled) to a non-trivial random limit. This indicates that in low dimensions there is effectively no finite critical coupling separating diffusive and localized phases: sufficiently strong attraction enforces subdiffusivity or full trapping. This behavior mirrors the classical ``polymer collapse'' phenomenon in statistical mechanics and is conceptually connected to binding in quantum field models such as the Fröhlich polaron~\cite{Spohn1986} in dimension $d \le 2$. We want to note, however, that all techniques that are used to rigorously study behavior of self-interacting random walks discussed so far are based on the large deviation toolbox, meaning that the time decay is either constant or mean--field related.

One interesting feature of our model is the presence of a \emph{long-range temporal interaction}
\[
   W(z,t) = \frac{\Vert z \Vert^\gamma}{|t|^\xi},
\]
which decays slowly in the time separation $|t-s|$ and is not constant. Long-range memory can deeply affect path behavior and is reminiscent of Dyson’s one-dimensional Ising model with power-law interactions~\cite{Dyson1969,Baker1972}, where sufficiently slow decay leads to a genuine phase transition. In hierarchical versions of such spin systems, quantities related to susceptibility satisfy recursive relations across dyadic scales, reflecting the accumulation of correlations over multiple length scales. While our model is not hierarchical by construction, a similar dyadic variance recursion emerges at the level of the analysis, driven by long-range temporal interactions (see Lemma \ref{lemma:Vn_growth}). Rigorous results for long-range \emph{attractive} interactions of Brownian paths were recently obtained by the authors in~\cite{BeSchSe25b}, confirming earlier physical predictions of pinning transitions for long-range self-interacting path measures~\cite{Spohn1986}.

While attractive interactions lead to collapse, the \emph{repulsive} interactions considered in the present work reward expansion of the walk. This parallels the physics of self-avoiding polymers, where avoidance of previously visited sites leads to superdiffusive or even ballistic behavior. In one dimension, even weak repulsion can induce superdiffusivity: Tóth~\cite{Toth1995} proved that the ``true'' self-avoiding walk (TSAW) on $\mathbb{Z}$ scales like $t^{2/3}$ 
and various reinforced or repulsive random walks exhibit similar persistent drift. 
In \cite{PrSaSc08}, the authors study the model defined in equation \eqref{equ:long_range_rw} with $\gamma=2$ and $\xi \in (3,4)$ in two dimensions. They deduce that the process is diffusive for high enough temperature, and ballistic for low enough temperature. This implies that temperature is a much more delicate parameter in the discrete attractive setting compared to the continuous model studied in \cite{BeSchSe25b}. 
More generally, self--repellent walks on $\bbZ$ as in \cite{Toth1995} are defined via a Gibbs measure depending on the local time of the process \cite{AmPaPe83,ToVe11}. The conjectured limiting object for a lot of random self--repellent motions is the ``true`` self--repelling motion, constructed in \cite{ToWe98}. The convergence of TSAW to the true self--repelling motion was verified recently in \cite{KoPe25}. Other self-repelling models include  \cite{NoRoWi87,DuRo92}, where the authors are interested in the solution of an SDE which yields a self--repulsive process. Quantities of interest are the scaling behavior or the mean--square displacement. Results on the mean--square displacement can also be found in \cite{TaToVa12}. The  work \cite{HoToVe10} discusses connections between TSAW and the continuous model. Moreover, scaling limits are provided for both models in $d \ge 3$. In the recent work \cite{CaGi25}, an invariance principle is derived in $d=2$, providing evidence that the mean--square displacement is diffusive times a logarithmic divergence with power $1/2$.

In the present model, we show superdiffusive behavior whenever
\[
   \xi < \frac{\gamma}{2} + c_{\mathrm{crit}}, \qquad c_{\mathrm{crit}}\approx 1.
\]
That is, for sufficiently slow temporal decay of the repulsive potential, the energetic incentive to keep increments large overcomes the entropic cost, and the mean-square displacement grows strictly faster than linearly:
\[
    \bbE^{\bbPh_{\alpha,T}}[ \Vert x_T\Vert ^2] \gg T.
\]
This work follows a general methodology that was recently developed and applied in the continuous polaron model with self-attracting pair potential. The main idea is that one can use correlation inequalities (previously the Gaussian correlation inequality) to compare the full measure $\bbPh_{\alpha,T}$ to a simpler path measure, say $\bbPw_{\alpha,T}$. Two central simplifications are replacing complicated interactions by simpler ones, and omitting interactions between different areas, recovering a Markov property. For more details, see e.g. \cite[Corollary 2.2]{Se24} and Lemma \ref{lemma:omitting_interactions}.
Compared to the previous work~\cite{BeSchSe25b}, we are not able to use the previously developed techniques based on Gaussian domination. While they are powerful and apply to a broad class of problems, they crucially rely on the underlying path measure being Gaussian and the interaction to be self-attracting.
The discrete-time setting and the repulsive long-range interactions studied in this work therefore require a different approach, and the present work extends the range of techniques available for analyzing long-range effects in self-interacting random walks.

\section{Correlation inequalities and change of measure}
The following is the specific GKS inequality that is used for the remainder of this paper. As soon as it is justified that showing \eqref{eq:gks_2_objective} is sufficient to prove the claim, we simply follow the argument in \cite{simon79}.
Let $\caF_0$ be the class of functions with 
$$f \in \caF_0 \iff f: \bbR \rightarrow \bbR, f  \text{ is increasing and non-negative on $(0,\infty)$ and even or odd.}$$
By $\caF$ we denote positive linear combinations of functions in $\caF_0$. Note that $\caF$ is closed under taking products, which is a tremendous advantage compared to increasing functions in the FKG sense. We also define the function class
$$\caF_n := \Big\{ f : \bbR^n \to \bbR, f(x_1,...,x_n) = \prod\limits_{i=1} ^n f_i (x_i), f_i \in \caF \Big\}.$$
In other words, $\caF_n$ lifts the space $\caF$ to $n$ dimensions by taking products of coordinates. 
In particular, p-spin interactions of the form
$F(\phi) = \prod_{k=1} ^n \phi_{t_k} ^{i_k}$
are contained in $\caF_n$. Here, as before,
$$\phi_{t_k} = (\phi_{t_k} ^1,...,\phi_{t_k}^d) \in \bbR^d.$$

\begin{theorem}[{\cite[Theorem 7.1]{simon79}}]
    \label{thm:gks}
    Let $f,g \in \caF_n$ be functions that take only coordinates of one--step increments as input. I.e., 
    $f(x) = f_1 (\phi_1 ^1) f_2 (\phi_1 ^2) \dots f_{dT}(\phi_T ^d)$ with $f_j \in \caF$.
    Then, it holds that
    $$\bbE^{\bbPh_{\alpha,T}}[f g]  \ge \bbE^{\bbPh_{\alpha,T}}[ f] \bbE^{\bbPh_{\alpha,T}}[g].$$
\end{theorem}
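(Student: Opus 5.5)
We follow the standard route for the GKS inequality in \cite{simon79}: double the system and exploit a positivity property of single-site measures. Write $\mu := \prod_{k=1}^T \eta(\de \phi_k)$. By Assumption \ref{ass:short_range}, the interaction
\[
H(\phi) = \alpha \sum_{0\le i<j\le T} W\Bigl(\sum_{k=i+1}^j \phi_k, j-i\Bigr)
\]
expands, after multiplying out $\bigl(\sum_p (\sum_k \phi_k^p)^q\bigr)^i$, into a finite sum of monomials $\prod_{k,p} (\phi_k^p)^{m_{k,p}}$ with non-negative coefficients, since every $c_{i,t}\ge 0$. Hence $e^{H} = \sum_{n\ge 0} H^n/n!$ is a convergent (on the compact support) non-negative combination of monomials, and each monomial lies in $\caF_n$. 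The claim $\bbE[fg]\ge \bbE[f]\bbE[g]$ is equivalent to
\begin{equation}
\label{eq:gks_2_objective}
\iint \bigl(f(\phi)-f(\phi')\bigr)\bigl(g(\phi)-g(\phi')\bigr)\, e^{H(\phi)+H(\phi')}\,\mu(\de\phi)\mu(\de\phi') \ge 0 .
\end{equation}
This is the reduction we will carry out first; it is plain algebra after multiplying by $Z_{\alpha,T}^2$.

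Next we change variables coordinatewise to $u = (\phi+\phi')/\sqrt2$ and $v=(\phi-\phi')/\sqrt2$, or more conveniently we work with the pair $(\phi_k^p,\phi_k'^p)$ per coordinate, as in Ginibre's argument. The key observation is the following. For $F\in\caF_0$ (even or odd, increasing and non-negative on $(0,\infty)$), both $F(x)+F(y)$ and $F(x)-F(y)$ are non-negative combinations of products of the form $a(x+y)\,b(x-y)$, where $a,b$ are functions of the type ``even or odd, non-negative and increasing on $(0,\infty)$'' in the variables $s=x+y$ and $t=x-y$. In practice we only need the monomial case $x^m \pm y^m$, which expands as a polynomial in $s,t$ with non-negative coefficients. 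For general $f\in\caF$ we would argue directly as in \cite[Theorem 12.1]{simon79}.

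With this in hand, the integrand in \eqref{eq:gks_2_objective} becomes a sum of products, each consisting of factors $\pm$-symmetric in $(x,y)$. Specifically, $e^{H(\phi)+H(\phi')}$ is a sum of products of terms $x^m+y^m$ or $x^m y^m$ over coordinates, and the product $x^my^m$ is handled by writing $xy=(s^2-t^2)/4$. This last sign is the delicate point. We will avoid it by using the alternative identity
\[
x^my^m+\dots
\]
over symmetrized pairs: since $e^{H(\phi)}e^{H(\phi')}$ is symmetric under swapping each coordinate pair, it expands in terms $x^ay^b+x^by^a$. Each such term is a non-negative combination of $s^{\cdot}t^{\cdot}$ with even total $t$-degree, following Ginibre's lemma.

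The integral then factorizes over coordinates $(k,p)$, because $\mu\otimes\mu$ is a product of $\nu\otimes\nu$. It therefore suffices to show that for every $a,b\ge0$,
\[
\iint s^a t^b\,\nu(\de x)\nu(\de y)\ge 0 .
\]
This follows because $\nu\otimes\nu$ is invariant under $x\leftrightarrow y$, i.e.\ $t\mapsto -t$, and under $(x,y)\mapsto(-x,-y)$. Consequently odd-$b$ terms vanish, and for even $b$ we use $(s,t)\mapsto(-s,t)$ via $(x,y)\mapsto(-y,-x)$ to kill odd $a$. All surviving terms are non-negative.

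The main obstacle is the expansion step: verifying that $(f(\phi)-f(\phi'))(g(\phi)-g(\phi'))e^{H(\phi)+H(\phi')}$ is, coordinatewise, a non-negative combination of $s^a t^b$ with the right parities. We would first treat monomial $f,g$, which covers the $p$-spin interactions needed later, and then extend to $\caF$ by approximation, exactly as in Ginibre's treatment reproduced in \cite{simon79}. The symmetry of $\nu$ is precisely what makes the single-site positivity hold.
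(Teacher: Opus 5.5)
Your overall plan (double the system, rotate to $s=x+y$, $t=x-y$, and conclude from $\iint s^a t^b\,\nu(\de x)\nu(\de y)\ge 0$) is a legitimate route, and you verify that last positivity correctly from the symmetry of $\nu$. The gap is in the step you yourself call the main obstacle: it is not just unverified, the mechanism you propose for it is false. First, the doubled weight $e^{H(\phi)+H(\phi')}$ is invariant only under the \emph{global} swap $\phi\leftrightarrow\phi'$, not under swapping a single coordinate pair. For $H=J\phi_1\phi_2$, swapping site $1$ alone turns $J(\phi_1\phi_2+\phi_1'\phi_2')$ into $J(\phi_1'\phi_2+\phi_1\phi_2')$. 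Second, even symmetrized single-site terms need not be non-negative in $(s,t)$, since $xy+yx=\tfrac12(s^2-t^2)$. Consequently, expanding $e^{H(\phi)}e^{H(\phi')}$ into monomials $\phi^{\alpha}\phi'^{\beta}$ and rotating term by term produces negative coefficients. For example, $J^2\phi_1\phi_2\phi_1'\phi_2'=\tfrac{J^2}{16}(s_1^2-t_1^2)(s_2^2-t_2^2)$.

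The missing idea is the regrouping the paper uses. Keep the exponent as $\sum_A J_A(\phi^A+\phi'^A)$, expand in powers of the sums $\phi^A+\phi'^A$, and apply iteratively
\[
ab\pm a'b'=\tfrac12(a+a')(b\pm b')+\tfrac12(a-a')(b\mp b').
\]
This writes each $\phi^A+\phi'^A$, as well as $f(\phi)-f(\phi')$ and $g(\phi)-g(\phi')$, as a non-negative combination of products of single-site factors $f_k(x_k)\pm f_k(y_k)$; for instance $\phi_1\phi_2+\phi_1'\phi_2'=\tfrac12(s_1s_2+t_1t_2)$. For monomials, $x^m\pm y^m=2^{1-m}\sum_j\binom{m}{j} s^{m-j}t^j$ (sum over even, resp.\ odd, $j$) is non-negative in $(s,t)$, and your single-site argument then closes the proof for polynomial $f,g$.

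For general $f\in\caF$, your appeal to approximation does not work as stated. On the support $\{\pm1,\pm2\}$, the function $\mathrm{sgn}(x)\one_{|x|\ge 2}\in\caF_0$ cannot even be interpolated by an odd polynomial with non-negative coefficients, since vanishing at $1$ forces all coefficients to vanish. The paper instead proves the single-site inequality $\iint\prod_i\bigl(f_i(x)+\epsilon_i f_i(y)\bigr)\lambda(\de x)\lambda(\de y)\ge0$ directly. It uses the symmetry of $\lambda$ and the parity of the $f_i$ to reduce to $x,y\ge0$, and then to $x\ge y\ge 0$, where every factor is non-negative by monotonicity. That argument needs no rotation and covers all of $\caF$.
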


\begin{proof}
    By linearity of the integral we may assume that $f_j \in \caF_0$ for all $j \le dT$. Note that 
    $$\bbE^{\bbPh_{\alpha,T}}[f g]  - \bbE^{\bbPh_{\alpha,T}}[ f] \bbE^{\bbPh_{\alpha,T}}[g] = \frac 1 2 \iint \bbPh_{\alpha,T}(\de x)\bbPh_{\alpha,T}(\de y) \big(f(x) - f(y)\big) \big(g(x) - g(y)\big).$$
    Recall that $Z_{\alpha,T}$ denotes the partition function of $\bbPh_{\alpha,T}$. Moreover, take $\Xi(\de \phi) := \prod_{1\le j \le T} \eta (\de \phi_j)$. As a random walk contains the same information as its time steps at each time point, there exists $\Tilde{f}: \phi \rightarrow \bbR$ such that
    $$\Tilde{f}(\phi) = f(x),$$
    if $x$ is the random walk generated by the steps $\phi$. Finally, abbreviating
    $$F(\phi, \varphi) = \big(\Tilde{f}(\phi ) - \Tilde{f}(\varphi)\big) \big(\Tilde{g}(\phi) - \Tilde{g}(\varphi)\big)$$ 
    and
    $$\phi_{[s+1,t]} =\sum\limits_{i=s+1} ^t \phi_i,$$
    it holds that 
    \begin{equation}
        \nn
        \begin{split}
           & 2 Z_{\alpha,T} ^2 \lt( \bbE^{\bbPh_{\alpha,T}}[f g]  - \bbE^{\bbPh_{\alpha,T}}[ f] \bbE^{\bbPh_{\alpha,T}}[g] \rt) \\
           &= \iint \Xi \lt(\de \phi \rt) \Xi \lt(\de \varphi \rt) F(\phi, \varphi)  \exp \lt( \alpha \!\!\!\! \sum\limits_{1 \le s< t \le T} \!\!\! W \lt( \sum\limits_{i = s+1} ^t \phi_i , t-s  \rt) 
           \!+ \!W \lt( \sum\limits_{i = s+1} ^t \varphi _i , t-s  \rt)\!\rt) \\
           &= \iint \Xi \lt(\de \phi \rt) \Xi \lt(\de \varphi \rt) F(\phi, \varphi) \exp \lt( \! \alpha \!\!\ \sum\limits_{s < t } \!\!\!\sum\limits_{i=1} ^{n(t-s)}\!\! c_{i,t-s} \!\!\lt( \!\! \lt( \sum\limits_{p=1} ^d (\phi_{[s+1,t]}^p)^q \rt)^i \!\!\!+ \!\! \lt( \sum\limits_{p=1} ^d (\varphi_{[s+1,t]}^p)^q \!\!\rt)^i \rt)\!\!  \rt). \\
        \end{split}
    \end{equation}
    It thus suffices to show
    \begin{equation}
    \label{eq:gks_2_objective}
    \iint \prod\limits_{i=1} ^m \big( F_i (\phi) + \epsilon_i F_i(\varphi) \big) \Xi( \de \phi) \Xi( \de \varphi) \ge 0
    \end{equation}
    with $F_i$ being a product of $\caF_0$ functions and $\epsilon_i = \pm 1$. This follows from expanding the exponentials into a series (note that products of coordinates of any power are contained in $\caF$). The main point now is that
    $$ab \pm a'b' = \frac 1 2 (a + a')(b \pm b') + \frac 1 2 (a-a')(b \mp b').$$
    That is, \eqref{eq:gks_2_objective} is shown if we can show the claim for $F_i = f_i \in \caF_1$ and $\Xi$ replaced by an even measure $\lambda$ on $\bbR$.
    Since $f_i$ is even or odd,
    $$f_i (\sigma x) = \sigma^{\pi(i)} f_i(x),$$
    with $\sigma = \pm 1$ and  $\pi(i) =2$ if $f_i$ is even and $1$ if $f_i$ is odd. Since $\lambda$ is even we find that \eqref{eq:gks_2_objective} can be written as
    \begin{equation}
        \nn
        \begin{split}
            \eqref{eq:gks_2_objective} &=\sum\limits_{\sigma_1 = \pm 1, \sigma_2 =\pm 1} \int_{x \ge 0, y \ge 0}  \prod\limits_{i=1} ^m \lt( f_i( \sigma_1 x) + \epsilon_i f_i (\sigma_2 y) \rt)  \lambda(\de x)  \lambda(\de y) \\
            &= \sum\limits_{\sigma_3 = \pm 1} \big[ 1+ (-1)^{\sum_{i=1}^m\pi(i)} \big] \int_{x,y \ge 0} \prod\limits_{i=1} ^m  \big( f_i (x) + \epsilon_i \sigma_3 ^{\pi(i)} f_i (y) \big)  \lambda(\de x)  \lambda (\de y). 
        \end{split}
    \end{equation}
In the last equality we consider the cases $\sigma_1 = \sigma_2$ and $\sigma_1 \neq \sigma_2$ with $\sigma_3 = \sigma_1 \sigma_2$. The conclusion is that it suffices to show the claim integrating over $x,y \ge 0$. This is just $1+\prod_{i=1}^m \epsilon_i$
times the integral over $x \ge y \ge 0$. But over this domain all functions we integrate are non-negative (monotone and non-negative by assumption).
\end{proof}

In a first step we use (GKS) to argue that it suffices to treat the case $d=1$. Here, the assumption that the random walk evolves independently in each coordinate is a central ingredient. Indeed, it is possible to treat every coordinate as independent spin, meaning that it is possible to only keep interactions acting on the same dimension.
\begin{corollary}
    \label{cor:1d_suffices}
    Let $\bbP$ be a product random walk with increment distribution $\eta$ and coordinate distribution $\nu$. Denote the one--dimensional reweighted random walk with increments $\nu$ by
    $\bbPh^1 _{\alpha,T}$. Then,
    $$\bbE^{\bbPh_{\alpha,T}}[\Vert x_T \Vert ^2] \ge d \bbE^{\bbPh^1 _{\alpha,T}}[x_T ^2].$$
\end{corollary}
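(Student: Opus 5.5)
By symmetry of the coordinates, $\bbE^{\bbPh_{\alpha,T}}[\Vert x_T\Vert^2] = d\,\bbE^{\bbPh_{\alpha,T}}[(x_T^1)^2]$. It therefore suffices to show $\bbE^{\bbPh_{\alpha,T}}[(x_T^1)^2] \ge \bbE^{\bbPh^1_{\alpha,T}}[x_T^2]$. The plan is to expand the interaction $W$ into a positive combination of monomials in the spin coordinates. We then remove every monomial that involves coordinates $p\neq 1$ and check, via GKS, that each removal can only decrease the second moment. Once this is done, the first coordinate evolves under exactly the one-dimensional measure $\bbPh^1_{\alpha,T}$, and the other coordinates are independent and do not affect $x^1_T$.

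\textbf{Interpolation.} Expand $\big(\sum_p (\phi^p_{[s+1,t]})^q\big)^i$ into a sum of products of powers of single spin coordinates with nonnegative integer coefficients. Write the full Hamiltonian as $H = H_1 + H_{\mathrm{rest}}$. Here $H_1$ collects the monomials involving only first coordinates, which is precisely $\sum_{s<t} W_1(\phi^1_{[s+1,t]},t-s)$ with $W_1(z,t)=\sum_i c_{i,t} z^{qi}$, the one-dimensional interaction. All monomials in $H_{\mathrm{rest}}$ carry nonnegative coefficients. For $\lambda\in[0,1]$ define $\bbP^\lambda \propto \exp(\alpha H_1 + \lambda\alpha H_{\mathrm{rest}})\,\Xi$. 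Each such measure is of the GKS type required in the proof of Theorem \ref{thm:gks}: the interaction is an absolutely convergent positive series of products of $\caF_0$ functions of single coordinates, and the base measure is an even product measure. That proof only used these properties, so the inequality of Theorem \ref{thm:gks} holds for every $\bbP^\lambda$.

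\textbf{Monotonicity.} Set $g=(x_T^1)^2 = \sum_{a,b} \phi_a^1\phi_b^1$. This is a positive combination of products of odd or even increasing functions of single coordinates, so $g\in\caF$-type and GKS applies termwise. Then
$$\frac{\de}{\de\lambda}\bbE^{\lambda}[g] = \alpha\big(\bbE^\lambda[g H_{\mathrm{rest}}] - \bbE^\lambda[g]\bbE^\lambda[H_{\mathrm{rest}}]\big)\ge 0$$
by GKS, applied monomial by monomial. Differentiation under the integral is justified because $\supp\nu$ is compact, so everything is bounded. Hence $\bbE^{1}[g]\ge \bbE^0[g]$, where $\bbE^1 = \bbE^{\bbPh_{\alpha,T}}$. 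Under $\bbP^0$ the density factorizes into a function of $(\phi^1_k)_k$ times the law of the remaining coordinates. The marginal of $(\phi^1_k)_k$ is therefore exactly the one-dimensional reweighted walk with increments $\nu$, so $\bbE^0[g]=\bbE^{\bbPh^1_{\alpha,T}}[x_T^2]$.

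The main obstacle is making sure the GKS inequality legitimately applies to the interpolated measures and to the unbounded-in-structure functions $g$ and $H_{\mathrm{rest}}$. These are not products but sums of products, so one uses bilinearity together with the fact that each monomial is a product of functions $z\mapsto z^k$, which lie in $\caF_0$. The rest is bookkeeping.
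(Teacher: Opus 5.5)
Your proof is correct and follows essentially the paper's route: you expand $W$ into nonnegative monomials in the spin coordinates and use GKS to discard the terms that mix coordinates. Your $\lambda$-interpolation is just a derivative form of Lemma \ref{lemma:omitting_interactions}.

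The one cosmetic difference is what gets discarded. The paper keeps the diagonal terms $(z^p)^{qi}$ for every $p$, so the reduced measure is directly a product of $d$ independent copies of $\bbPh^1_{\alpha,T}$ and no symmetry argument is needed. You drop those terms as well and recover the factor $d$ from permutation symmetry of the coordinates.
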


\begin{proof}
   Recall by Assumption \ref{ass:short_range} that
   $$W(z,t) = \sum\limits_{i=1} ^{n(t)} c_{i,t} \lt( \sum\limits_{p=1} ^d (z^p) ^q \rt)^i.$$
   By the binomial Theorem
   $$W(z,t) = \sum\limits_{i=1} ^{n(t)} c_{i,t} \sum\limits_{p=1} ^d   (z^p) ^{qi} + (\text{remainder}),$$
   where it is easily verified that the remainder term is contained in $\caF_n$ for some $n \in \bbN$ due to the fact that it is a (non--negative) linear combination of products of monomials. By (GKS), this term can be omitted, showing the claim.
\end{proof}
Corollary \ref{cor:1d_suffices} allows us to work with random walks in $1$ dimension, which we will do without mentioning it again.

\begin{remark}
    \label{rem:density_replacement}
    We will frequently use Theorem \ref{thm:gks} in the following way: take 
    \begin{equation}
        \label{equ:nn_measure}
        \bbPw_{\alpha,T}(\de x) \propto \exp \lt( \alpha
 \sum\limits_{i=1} ^{T-2} W( x_{i+2} - x_i , 2) \rt) \bbP(\de x).
    \end{equation}
 Since we can expand 
 $$\frac{ \de \bbPh_{\alpha,T} }{\de \bbPw_{\alpha,T}} \propto \exp \lt(\alpha \sum_{\substack{0 \le i < j \le T \\ j-i \neq 2}} W(x_j - x_i , |j-i|)  \rt)$$
 into a series and by interchanging integral and summation, we find that this part of the self-interaction is a uniformly bounded limit of positive linear polynomials of single steps. That is, we find for $f(x) = f_1(\phi_1)f_2 (\phi_2)...$ with $f_j \in \caF$ that
 $$\bbE^{\bbPh_{\alpha,T}}[f] \ge \bbE^{\bbPw_{\alpha,T}}[f].$$
 Of course, there is nothing special about this specific grouping of self-interactions. The point is that one can omit certain self-interaction terms if one only wants to lower bound an average for specific functions $f$.
\end{remark}
The argument provided in the last Remark is formalized in more generality in the following Lemma.
\begin{lemma}
    \label{lemma:omitting_interactions}
    Fix $N \in \bbN$ and let $\phi = (\phi_1,...,\phi_N)$ be i.i.d. spins with symmetric and compact law. For every $I \subseteq [N]$ let
    $H_I$ be a non-negative combination of monomials and 
    $$\phi_I = \sum\limits_{i \in I} \phi_i.$$
    Let $M \subseteq 2^{[N]}$ and define
    $$\bbP_{M}(\de \phi) \propto \exp \lt( \sum\limits_{I \in M} H_I (\phi_I) \rt) \bbP(\de \phi),$$
    where $\bbP$ is the product measure with corresponding single site distribution.
    Then, for any $f \in \caF_N$ taking only $\phi_1,...,\phi_N$ as input and $M \supseteq M'$,
    $$\bbE^{\bbP_M}[f] \ge \bbE^{\bbP_{M'}}[f].$$
\end{lemma}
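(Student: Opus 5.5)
The plan is to interpolate between $\bbP_{M'}$ and $\bbP_M$ and show that the derivative of the expectation of $f$ along the interpolation is non-negative, using Theorem \ref{thm:gks} at each point of the path. For $s \in [0,1]$ define
$$\bbP_s(\de \phi) \propto \exp \Big( \sum_{I \in M'} H_I(\phi_I) + s \sum_{I \in M \setminus M'} H_I(\phi_I) \Big) \bbP(\de \phi),$$
so that $\bbP_0 = \bbP_{M'}$ and $\bbP_1 = \bbP_M$. Since the spins have compact support, every $H_I(\phi_I)$ is bounded, so the exponent is bounded. Hence $s \mapsto \bbE^{\bbP_s}[f]$ is differentiable, and differentiation under the integral gives
$$\frac{\de}{\de s}\bbE^{\bbP_s}[f] = \sum_{I \in M\setminus M'} \Big( \bbE^{\bbP_s}[f\, H_I(\phi_I)] - \bbE^{\bbP_s}[f]\,\bbE^{\bbP_s}[H_I(\phi_I)] \Big).$$
It therefore suffices to show that each covariance on the right is non-negative.

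For this, I would expand $H_I(\phi_I)$ by the multinomial theorem. Because $H_I$ is a non-negative combination of monomials $z^k$, the term $(\sum_{i\in I}\phi_i)^k$ becomes a non-negative combination of products $\prod_{i\in I}\phi_i^{k_i}$. Each factor $\phi_i^{k_i}$ is even or odd, and it is increasing and non-negative on $(0,\infty)$, so it lies in $\caF_0$. Thus $H_I(\phi_I)$ is a non-negative linear combination of elements of $\caF_N$. By bilinearity of the covariance, it is then enough to show that $\mathrm{Cov}_{\bbP_s}(f,g)\ge 0$ for $f,g\in\caF_N$.

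This is exactly the content of Theorem \ref{thm:gks}, applied with $\bbP_s$ in place of $\bbPh_{\alpha,T}$. The proof of Theorem \ref{thm:gks} used only the following facts: the reference measure is a product of symmetric single-site laws, and the exponent, once expanded into a series, is a non-negative combination of products of $\caF_0$ functions of single spins. Both facts hold for $\bbP_s$, since $s\ge 0$ and each $H_I(\phi_I)$ expands as described above. The reduction to \eqref{eq:gks_2_objective} and the symmetrisation argument then go through verbatim. Integrating the derivative from $0$ to $1$ yields $\bbE^{\bbP_M}[f]\ge \bbE^{\bbP_{M'}}[f]$.

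The main obstacle is justifying that Theorem \ref{thm:gks} applies to the measures $\bbP_s$, which are not literally of the form $\bbPh_{\alpha,T}$. I would handle this by restating its proof for a general exponent $\sum_J a_J \prod_{j} \phi_j^{k_j}$ with $a_J\ge 0$. The argument needs three ingredients. First, exchanging series and integral is justified by compactness of the support, which makes all series converge uniformly. Second, the identity $ab\pm a'b'$ reduces the problem to single sites. Third, the evenness of the single-site law handles those single sites.

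A second, milder point is that $f \in \caF_N$ is a product of elements of $\caF$, which are positive combinations of $\caF_0$ functions. Expanding this product gives positive combinations of products of $\caF_0$ functions, so $f$ is covered by the same argument.
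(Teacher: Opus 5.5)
Your proof is correct. It reaches the same GKS covariance inequality as the paper, but by a different route.

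The paper works directly with the Radon--Nikodym derivative $\de\bbP_M/\de\bbP_{M'}\propto e^{B}$, where $B=\sum_{I\in M\setminus M'}H_I(\phi_I)$. It shows $\bbE^{\bbP_{M'}}[f e^{B}]\ge \bbE^{\bbP_{M'}}[f]\,\bbE^{\bbP_{M'}}[e^{B}]$ by expanding $e^{B}=\sum_n B^n/n!$ and applying GKS once, under the single measure $\bbP_{M'}$, to $f$ paired with each power $B^n$.

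You instead interpolate through $\bbP_s$ and apply GKS under every $\bbP_s$, but only against the polynomial $B$ itself, never its powers. The price is a differentiation under the integral, which is harmless by boundedness. As a by-product you get that $s\mapsto\bbE^{\bbP_s}[f]$ is nondecreasing, i.e.\ monotonicity in the strength of the added interactions.

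Both routes need Theorem \ref{thm:gks} for Gibbs measures that are not literally of the form $\bbPh_{\alpha,T}$. The paper applies it to $\bbP_{M'}$ without comment. Your explicit check fills in that tacit step: the proof of Theorem \ref{thm:gks} only uses the symmetric product reference measure and the non-negative-coefficient structure of the expanded exponent.

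Your justification of the series--integral exchange, by uniform convergence on the compact support, is also more accurate than the paper's appeal to Tonelli. When $f$ has odd factors, the summands $f B^n$ need not be non-negative.
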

\begin{proof}
    It is easily seen that 
    $$\frac{\de \bbP_M}{\de \bbP_{M'}}= \frac{1}{\bbE^{\bbP_{M'}}\Big[ \exp \Big( \sum_{I \in M \backslash M'} H_I(\phi_I)\Big)\Big]} \exp \lt( \sum\limits_{I \in M \backslash M'} H_I(\phi_I) \rt).$$
    It therefore suffices to show for $f \in \caF$ as given that
    $$\bbE^{\bbP_{M'}} \lt[f \exp \lt( \sum\limits_{I \in M \backslash M'} H_I(\phi_I) \rt) \rt] \ge \bbE^{\bbP_{M'}} \lt[ f \rt] \bbE^{\bbP_{M'}} \lt[ \exp \lt( \sum\limits_{I \in M \backslash M'} H_I(\phi_I) \rt) \rt].$$
    The result follows by expanding the exponential into a series, exchanging summation and integration and applying Theorem \ref{thm:gks}. The fact that integration and summation can be exchanged follows by Tonelli's Theorem.
\end{proof}

Let us also consider another simplification that we already discussed. As our random walk is assumed to be symmetric and the interaction between any two spins is of product form, we are in the position to apply Wells' inequality. 
\begin{theorem}[Wells' inequality]
    \label{thm:wells_inequality}
    Denote by $\bbPh_{\alpha,\nu,T}$ the measure in \eqref{equ:perturbed_rw} with step distribution $\nu$. Then, there exists $a > 0$ so that uniformly in $T$
    $$\bbE^{\bbPh_{\alpha,\nu,T}} [x_T ^2] \ge \bbE^{\bbPh_{\alpha,\mu,T}} [x_T ^2],$$
    where $\mu = \frac 1 2 ( \delta_a + \delta_{-a})$ is symmetric and supported on $\pm a$. 
\end{theorem}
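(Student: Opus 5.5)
The plan is to prove Wells' inequality as a comparison of single-site measures. The first step is to reduce it to an inequality for two measures $\nu$ and $\mu$ on $\bbR$ that are compared through even functions. Under $\bbPh_{\alpha,\nu,T}$ the spins $\phi_1,\dots,\phi_T$ are i.i.d.\ with law $\nu$ and reweighted by $\exp(\alpha\sum W)$. Expanding this density as a power series in the spins, as in the proof of Theorem \ref{thm:gks}, writes both
\[
Z_{\alpha,\nu,T}\,\bbE^{\bbPh_{\alpha,\nu,T}}[x_T^2] \quad\text{and}\quad Z_{\alpha,\nu,T}
\]
as non-negative combinations of products $\prod_k \int \phi^{n_k}\,\nu(\de\phi)$. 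Odd moments vanish by symmetry, so only even moments of $\nu$ enter. The ratio of these two expansions is not obviously monotone in the moments, however. I will therefore use the standard form of Wells' inequality (cf.\ \cite{simon79}) rather than compare moments directly.

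The standard statement is as follows. Suppose there is a coupling $(\phi,\psi)$ with $\psi\sim\mu$, $\phi\sim\nu$, and $|\psi|\le|\phi|$ in the appropriate GKS sense, that is, $\psi=\epsilon|\phi|\cdot(a/|\phi|)$-type domination. Then GKS-correlations under $\mu$ are dominated by those under $\nu$. I would carry this out by the duplicate-variable trick in the proof of Theorem \ref{thm:gks}. Write each $\nu$-spin as $\phi_j=\epsilon_j r_j$, where $\epsilon_j$ is a Rademacher sign and $r_j\ge 0$ is independent of it; this is possible because $\nu$ is symmetric. Conditioning on $(r_j)$, the model becomes an Ising model in the signs $\epsilon_j$, with couplings that are non-negative polynomials in the $r_j$ (Assumption \ref{ass:short_range}) and are increasing in each $r_j$.

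The core step is monotonicity in $r$. For the Ising model with ferromagnetic (GKS) couplings, the quantity $\langle(\sum_j r_j\epsilon_j)^2\rangle = \sum_{j,k} r_j r_k\langle\epsilon_j\epsilon_k\rangle$ is non-decreasing in each $r_j$. The factors $r_jr_k$ increase explicitly. The correlations $\langle\epsilon_j\epsilon_k\rangle$ are non-negative by GKS I. Their derivative in any coupling is a truncated correlation, which is non-negative by GKS II (Theorem \ref{thm:gks}). The couplings themselves increase in $r$.

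Next I choose $a$ as an essential lower level. Let $a>0$ be such that $\nu(\{|\phi|\ge a\})>0$; any $a$ in the support other than $0$ works, and $\nu\neq\delta_0$ may be assumed since otherwise $x_T\equiv 0$ and nothing is to prove. Conditioning further on the event $E=\{r_j\ge a \ \forall j\}$ is awkward, because $\nu$ gives mass to $|\phi|<a$. So instead I aim for domination by the $\mu$-model restricted to a random subset of active sites. On the sites where $r_j\ge a$, monotonicity lets me lower $r_j$ to $a$. On the remaining sites I would lower $r_j$ to $0$, which simply removes those spins. By Lemma \ref{lemma:omitting_interactions} this only decreases the quantity; note that removing interactions decreases it, while removing spin-length terms also decreases it by the monotonicity above.

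This leaves a $\mu$-model on a random fraction of sites. Here is the main obstacle: comparing it with the full $\mu$-model on all $T$ sites. The dilution costs a constant factor, but the statement asks for no constant at all. The clean fix I would try first is a choice of $a$ that avoids dilution entirely. One takes $a$ with $\mu \preceq \nu$ in the sense of even moments, $\int\phi^{2n}\,\de\mu\le\int\phi^{2n}\,\de\nu$ for all $n$; for instance $a^2=\int\phi^2\,\de\nu$ fails, so instead take $a$ so small that $a^{2n}\le\int \phi^{2n}\,\de\nu$ for all $n$, which is possible since $\nu\ne\delta_0$ has compact support. Then Wells' original argument (comparing via the joint duplicated measure $\nu\otimes\mu$ and the rotated variables $(\phi\pm\psi)$) gives the inequality uniformly in $T$.
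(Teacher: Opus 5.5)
The paper gives no argument of its own here; it cites the appendix of \cite{BrLePf81}. Your proposal therefore has to stand on its own, and it has a genuine gap at the one step that carries the weight.

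Your first route is correct as far as it goes: writing $\phi_j=\epsilon_j r_j$ and using GKS I/II does give monotonicity in the $r_j$. But, as you note yourself, it only yields a \emph{diluted} $\pm a$ model, which is smaller than the full $\mu$-model. Your ``clean fix'' then asserts that moment domination $a^{2n}\le\int\phi^{2n}\,\de\nu$ lets Wells' duplicated-variable argument run. It does not. With different single-site laws on the two copies, the reduction in the proof of Theorem~\ref{thm:gks} (the identity $ab\pm a'b'=\dots$ plus factorisation over sites) leaves single-site integrals
\[
\int\!\!\int \prod_{k}\bigl(x^{n_k}+\epsilon_k\,y^{n_k}\bigr)\,\nu(\de x)\,\mu(\de y)
\]
with arbitrary exponents and arbitrary sign patterns. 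The $x\leftrightarrow y$ symmetry that made these non-negative for $\lambda\otimes\lambda$ is lost. In Ginibre's rotated variables you likewise need $\int\!\!\int(x+y)^m(x-y)^n\,\nu(\de x)\mu(\de y)\ge0$ for all $m,n$. Moment domination only handles a single factor.

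A concrete failure: take $\nu=\frac14\delta_0+\frac38(\delta_1+\delta_{-1})$ and $a^2=\frac34=\int\phi^2\,\de\nu$. Then $a^{2n}=(3/4)^n\le\frac34=\int\phi^{2n}\,\de\nu$ for all $n\ge1$. (So, contrary to your remark, $a^2=\int\phi^2\,\de\nu$ always satisfies your moment condition, by Jensen.) Yet
\[
\int\!\!\int(x^2-y^2)^3\,\nu(\de x)\mu(\de y)=\tfrac14\bigl(-\tfrac34\bigr)^3+\tfrac34\bigl(\tfrac14\bigr)^3=-\tfrac{3}{32}<0,
\]
and $(x^2-y^2)^3=(x+y)^3(x-y)^3$ is a case of both formulations. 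Nor can you fall back on Wells' condition in its usual form. Requiring the single-site inequality for all of $\caF$, which contains $\one_{\{|x|\ge c\}}$, forces $\nu(\{|x|<a\})=0$. That fails whenever $\nu$ charges a neighbourhood of $0$, e.g.\ for lazy or uniform steps.

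The missing ingredient is a tail condition at a level well above $a$. It suffices here because only polynomial interactions and the polynomial observable $x_T^2$ occur. Choose $R>0$ with $q:=\nu(\{|x|\ge R\})>0$ and $a\le R/\max\{2,1+q^{-1}\}$. Reduce to $x\ge0$, $y=a$ by evenness; this only changes the sign pattern. Then $G(x):=\prod_k(x^{n_k}+\epsilon_k a^{n_k})$ is non-negative on $[a,\infty)$, and it can be negative on $[0,a)$ only if the number of minus signs is odd. For $x\ge R$ and $0\le x'<a$ one has, factor by factor,
\[
x^{n}+a^{n}\ge|x'^{n}+a^{n}|,\qquad x^{n}-a^{n}\ge(R/a-1)\,|x'^{n}-a^{n}|,
\]
with $R/a-1\ge\max\{1,q^{-1}\}$. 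Hence $q\min_{[R,\infty)}G\ge\sup_{[0,a)}|G|$, and therefore $\int G(|x|)\,\nu(\de x)\ge0$. With this choice of $a$, which depends only on $\nu$ and not on $T$, $\alpha$ or the interaction, the duplicated-measure argument closes and proves the statement. With $a$ selected by moments, as you propose, it does not.
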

\begin{proof}
    See the Appendix of \cite{BrLePf81}.
\end{proof}

In other words, it suffices to prove Theorem \ref{thm:short_range} and \ref{thm:long_range_superdiffusive_behavior} for the simple random walk $\bbP$ instead of arbitrary random walks with symmetric and compact step distributions. We will therefore assume now without loss of generality for the remainder of this work that $\bbP$ is the discrete time simple random walk and $\alpha$ is multiplied by $a^i$ for some $i \in \bbN$ as given by Theorem \ref{thm:wells_inequality}. 
Note also that in this special case, the following functions can be treated by the GKS inequality.
\begin{enumerate}
    \item $(x_T)^2 = \Big( \sum\limits_{j=1} ^T \phi_j \Big)^2 = \sum\limits_{1 \le i, j \le T} \phi_i \phi_j$,
    \item $\one_{\phi_i = \phi_j} = \one_{\text{sgn}(\phi_i) = \text{sgn}(\phi_j)} =  \frac 1 2 \lt(1 + \frac{\phi_i}{|\phi_i|}\frac{\phi_j}{|\phi_j|}\rt)$,
    \item $\one_{\phi_1 = \phi_2 = \dots = \phi_N} = \prod\limits_{i=1} ^{N-1} \one_{\phi_i = \phi_{i+1}}$.
\end{enumerate}

\section{Proof of Theorem \ref{thm:short_range}}
We start with a simple calculation. Take $\beta = \zeta \alpha$, where $\zeta > 0$ is the constant taken from Assumption \ref{ass:short_range}.
Moreover, 
we assume that $\e{4\beta} \in \bbN$, which is just for convenience. One could alternatively do the arguments  below with $\e{4\beta}$ replaced by $ \lfloor \e{4\beta} \rfloor$. Finally, we will continue to only work with the simple random walk. As argued earlier, this suffices by Wells' inequality and multiplying $\alpha$ by some fixed constant.
\begin{proposition}
    \label{prop:short_range_prob_exponential_decay}
    Let $0 < j \le T-1$. For $\alpha \ge 1$ it holds that 
    $$\bbPw_{\beta,T} ( \phi_j = \phi_{j+1}) \ge 1- 4\e{-4\beta}.$$
\end{proposition}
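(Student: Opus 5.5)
The plan is to show that, after discarding irrelevant terms, $\bbPw_{\beta,T}$ is a one-dimensional nearest-neighbour Ising chain on the spins $\phi_1,\dots,\phi_T\in\{\pm1\}$ with free boundary conditions. For such a chain, the probability that a given bond is unsatisfied can be computed exactly by a spin-flip bijection.

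\emph{Step 1 (reduction).} The event $\{\phi_j=\phi_{j+1}\}$ has indicator $\frac12(1+\phi_j\phi_{j+1})$, which is GKS-admissible (item 2 of the list after Theorem \ref{thm:wells_inequality}). By Lemma \ref{lemma:omitting_interactions}, deleting interactions can only decrease $\bbPw_{\beta,T}(\phi_j=\phi_{j+1})$. So we may drop, for each $t=2$ block, every term of $W(\cdot,2)$ except the one carrying the coefficient $\zeta=c_{i,2}$ with $i$ even. It therefore suffices to treat the density proportional to
\[
\exp\Big(\beta\sum_{k=1}^{T-1}(\phi_k+\phi_{k+1})^{qi}\Big)
\]
with respect to the uniform law on $\{\pm1\}^T$. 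Here the constant $a^{\cdot}$ from Theorem \ref{thm:wells_inequality} is absorbed into $\beta$, and the lower bound only improves if $a\ge1$. Since $qi$ is even and positive, $(\phi_k+\phi_{k+1})^{qi}$ equals $2^{qi}\ge 4$ when $\phi_k=\phi_{k+1}$ and $0$ otherwise. Each bond thus contributes a weight $e^{K}$ with $K=2^{qi}\beta\ge4\beta$ when satisfied, and $1$ when not. If $qi=1$ were possible, $i$ would be odd, contradicting the assumption, so $qi\ge2$ indeed.

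\emph{Step 2 (exact computation).} Consider the map $\Phi$ that flips all spins $\phi_{j+1},\dots,\phi_T$. It is a bijection between $\{\phi_j=\phi_{j+1}\}$ and $\{\phi_j\ne\phi_{j+1}\}$. Every bond other than $(j,j+1)$ keeps its status, since both or neither of its endpoints are flipped. Consequently the weight of $\Phi(\phi)$ is exactly $e^{-K}$ times that of $\phi$ for $\phi\in\{\phi_j=\phi_{j+1}\}$. Summing over configurations gives
\[
\frac{\bbPw(\phi_j\ne\phi_{j+1})}{\bbPw(\phi_j=\phi_{j+1})}=e^{-K},
\]
and hence
\[
\bbPw(\phi_j\ne\phi_{j+1})=\frac{1}{1+e^{K}}\le e^{-4\beta}\le 4e^{-4\beta}.
\]
This yields the claim, with room to spare. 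The hypothesis $\alpha\ge1$ plays no essential role beyond keeping $\beta$ bounded away from $0$.

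\emph{Main obstacle.} The only delicate point is Step 1: making sure that the measure $\bbPw_{\beta,T}$, as intended with $\beta=\zeta\alpha$, really reduces to the pure Ising chain. Two issues need checking. First, the remaining terms of $W(\cdot,2)$, including any odd-power terms that could tilt single spins, must be removable in the correct direction via Lemma \ref{lemma:omitting_interactions}. Second, the rescaling from Wells' inequality must not shrink the effective coupling below $4\beta$. If the rescaling factor is smaller than $1$, I would handle it by redefining $\zeta$ to include $a^{qi}2^{qi}/4$, which the statement permits since $\zeta$ only enters through $\beta$.
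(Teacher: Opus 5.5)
Your proof is correct, but it takes a different route from the paper's. The paper uses GKS more aggressively: it discards \emph{all} interactions except the single bond between $\phi_j$ and $\phi_{j+1}$. This reduces the question to the two-spin measure $\bbP_2 \propto e^{\beta(\phi_1+\phi_2)^2}\nu(\de\phi_1)\nu(\de\phi_2)$. A crude lower bound $Z_2 \ge \frac14 e^{4\beta}$ on its partition function then gives $\bbP_2(\phi_1\neq\phi_2)\le 4e^{-4\beta}$ in one line. You instead keep the whole nearest-neighbour chain and evaluate the bond probability exactly by flipping $\phi_{j+1},\dots,\phi_T$.

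Your bijection actually shows that the bond variables $\phi_k\phi_{k+1}$ are independent under the free-boundary chain. So the paper's extra GKS step loses nothing here: the two-spin measure and the full chain give the same bond probability $1/(1+e^{K})$.

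What your route buys:
\begin{itemize}
\item an exact identity rather than a bound;
\item explicit handling of a general even power $qi$ via $(\phi_k+\phi_{k+1})^{qi}\in\{0,2^{qi}\}$ with $2^{qi}\ge 4$. The paper silently writes the quadratic case, although the same two-point computation covers the general one.
\end{itemize}

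What the paper's route buys is robustness: it needs nothing beyond the GKS inequality already in hand and a two-point calculation. Your flip argument, by contrast, relies on the retained interaction graph being a tree (one-dimensional chain, free boundary, no external field). It would fail as soon as a loop-creating interaction such as the bond $(j-1,j+1)$ were kept. Since you already invoke Lemma \ref{lemma:omitting_interactions} in Step 1, you could equally drop the remaining bonds there and make Step 2 trivial.

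You are also right that $\alpha\ge 1$ plays no role in either argument.
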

\begin{proof}
    By the GKS inequality it suffices to study $\bbP_{2} (\phi_1 = \phi_{2})$, where
    $$\bbP_2 (\de \phi_1 \de \phi_2) \propto \e{ \beta (\phi_1 + \phi_2)^2} \nu(\de \phi_1)\nu(\de \phi_2).$$
    Note that $Z_{2} \ge \frac 1 4 \e{4 \beta}$.
    It therefore holds that
    $$\bbP_2 (\phi_1 \neq \phi_2) \le 4\e{-4\beta},$$
    which implies the claim.
\end{proof}

\begin{proposition}
    \label{prop:short_distance_probability}
    There exists a constant $C_{\ref{prop:short_distance_probability}} >0$ such that, for every $i \in \bbN$ with $i + \e{4\beta}-1 \le T$ and $\alpha \ge 1$, it holds that
    $$\bbPw_{\beta,T}(\phi_i = \phi_{i+1} = ...= \phi_{i+ \e{4\beta}-1}) \ge C_{\ref{prop:short_distance_probability}}.$$
\end{proposition}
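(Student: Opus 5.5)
The plan is to exploit the Markov (nearest-neighbour) structure of $\bbPw_{\beta,T}$. Under $\bbPw_{\beta,T}$ (after the reductions above, simple random walk, $W(z,2)$ containing $\zeta z^{i}$ with $i$ even), the interaction only couples consecutive spins. Hence the spin sequence $(\phi_k)_k$ is a (possibly inhomogeneous, boundary-affected) Markov chain on $\{\pm1\}$.

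Rather than compute the chain exactly, I would use the GKS inequality to reduce to a cleaner model. By item (3) in the list of GKS-admissible functions, $\one_{\phi_i=\dots=\phi_{i+L-1}}$ with $L=\e{4\beta}$ is a product of functions in $\caF$. Lemma \ref{lemma:omitting_interactions} therefore lets us drop every interaction of $\bbPw_{\beta,T}$ except those among the spins $\phi_i,\dots,\phi_{i+L-1}$. The same lemma also lets us drop all monomials of $W(\cdot,2)$ except $\zeta(\phi_k+\phi_{k+1})^{i}$. Since $\phi_k\phi_{k+1}=\pm1$, this term equals $\zeta 2^{i}\one_{\phi_k=\phi_{k+1}}$, i.e. a weight $\e{c\beta}$ with $c=2^{i}\ge 4$, hence at least $\e{4\beta}$. (If needed, one can also drop part of the coefficient by GKS and keep exactly the $\e{4\beta}$ normalisation of Proposition \ref{prop:short_range_prob_exponential_decay}.)

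The reduced measure is then a one-dimensional nearest-neighbour Ising chain of length $L$ with free boundary. In terms of the bond variables $b_k=\one_{\phi_k\neq\phi_{k+1}}$, this chain is a product measure: the $b_k$ are i.i.d. with $\bbP(b_k=1)=p:=1/(1+\e{c\beta})\le \e{-4\beta}$. Hence
\[
\bbP(\phi_i=\dots=\phi_{i+L-1})=(1-p)^{L-1}\ge \bigl(1-\e{-4\beta}\bigr)^{\e{4\beta}-1}.
\]
The right-hand side is bounded below uniformly in $\beta$: it increases towards $\e{-1}$, and for $\alpha\ge1$ (so $\beta\ge\zeta$) we can take $C_{\ref{prop:short_distance_probability}}=(1-\e{-4\zeta})^{\e{4\zeta}}$, or simply note that $(1-1/L)^{L}\ge 1/4$ for $L\ge2$.

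The main obstacle is justifying the exact product structure. A naive union bound using Proposition \ref{prop:short_range_prob_exponential_decay} gives only $1-4L\e{-4\beta}=1-4<0$, which is useless. Applying GKS positive correlation to the events $\{\phi_k=\phi_{k+1}\}$ via Theorem \ref{thm:gks} gives $\prod_k \bbPw(\phi_k=\phi_{k+1})\ge(1-4\e{-4\beta})^{L}\approx \e{-4}$, which is an alternative route that avoids the bond-variable computation. I would try this GKS product bound first, since it only uses Proposition \ref{prop:short_range_prob_exponential_decay} and Theorem \ref{thm:gks} iterated (each indicator lies in $\caF$-products, and products of such remain admissible). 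This yields $C_{\ref{prop:short_distance_probability}}=\inf_{\beta\ge\zeta}(1-4\e{-4\beta})^{\e{4\beta}}>0$ provided $4\e{-4\zeta}<1$. If that fails for small $\zeta$, I fall back on the exact bond computation above, which has no such restriction.
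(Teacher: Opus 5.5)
Your proof is correct. The first step, using Lemma~\ref{lemma:omitting_interactions} to discard everything outside the block and all but one monomial of $W(\cdot,2)$, is the same reduction the paper makes. The second step is genuinely different.

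The paper never solves the reduced chain. It applies Theorem~\ref{thm:gks} again to bound the joint probability by $\prod_k \bbPw(\phi_k=\phi_{k+1})$. It then inserts the two-spin estimate $1-4\e{-4\beta}$ from Proposition~\ref{prop:short_range_prob_exponential_decay}, giving $(1-4\e{-4\beta})^{\e{4\beta}-1}\to\e{-4}$. This GKS product bound is the route you list as your ``alternative''. Your primary argument instead computes the free-boundary chain exactly through the bond variables.

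Your route buys sharpness and uniformity. In the reduced chain the bond events are independent, so the paper's GKS product step is actually an equality there. All of the paper's loss sits in the constant $4$, whereas you use the exact disagreement probability $1/(1+\e{c\beta})\le\e{-4\beta}$, which holds for every $\beta>0$.

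As you correctly observe, the paper's expression is only positive when $4\e{-4\beta}<1$, and the paper only argues the limit $\alpha\to\infty$. So for small $\zeta$, e.g.\ $\e{4\beta}\in\{2,3,4\}$ at $\alpha=1$, the written argument does not by itself give a constant uniform in $\alpha\ge1$. Your exact computation closes that gap.

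The advantage of the paper's route is modularity. It only ever analyses a two-spin system, so it carries over to reduced models that are not exactly solvable.

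One small slip: with $L=\e{4\beta}$, $(1-1/L)^{L-1}=(1+\frac{1}{L-1})^{-(L-1)}$ \emph{decreases} to $\e{-1}$ rather than increasing. It is therefore at least $\e{-1}$ for all $L\ge2$, and your case discussion for choosing the constant is unnecessary.
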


\begin{proof}
    By the GKS inequality it suffices to show
    $$\bbPw_{\beta,\e{4\beta}}(\phi_1 = \phi_{2} =...= \phi_{\e{4\beta}}) \ge C_{\ref{prop:short_distance_probability}}$$
    for all $\alpha \ge 1$. Again, by the GKS inequality it holds that
    $$\bbPw_{\beta,\e{4\beta}}(\phi_1 = \phi_{2} =...=\phi_{\e{4\beta}}) \ge \prod\limits_{j=1} ^{\e{4\beta}-1} \bbPw_{\beta,j+1} ( \phi_j = \phi_{j+1}).$$
    Now, use Proposition \ref{prop:short_range_prob_exponential_decay} to see that
    $$\prod\limits_{j=1} ^{\e{4\beta}-1} \bbPw_{\beta,j+1} ( \phi_j = \phi_{j+1}) \ge (1-4\e{-4\beta})^{\e{4\beta}-1}.$$
    For $\alpha \to \infty$ (and hence $\beta \to \infty$) this limit is well-known to be $\e{-4}$, showing the claim.
\end{proof}

\begin{proof}[Proof of Theorem \ref{thm:short_range}]
    W.l.o.g. we take $T$ such that $T / \e{4\beta} \in \bbN$. We additionally choose $\alpha \ge 1 $ so that $\exp(4\beta) \in \bbN$. Then, Theorem \ref{thm:gks} yields the estimate
    $$\bbE^{\bbPw_{\beta,T}} [ x_T ^2] =\bbE^{\bbPw_{\beta,T}} \lt[ \lt( \sum\limits_{n=1} ^T \phi_n \rt) ^2 \rt] \ge \frac{T}{\e{4\beta}} \bbE^{\bbPw_{\beta,\e{4\beta}}} \lt[ \lt( \sum\limits_{n=1} ^{\e{4\beta}} \phi_n \rt) ^2 \rt].$$
    Clearly
    $$\bbE^{\bbPw_{\beta,\e{4\beta}}} \lt[ \lt( \sum\limits_{n=1} ^{\e{4\beta}} \phi_n \rt) ^2 \rt] \ge \bbPw_{\beta,\e{4\beta}}(\phi_1 = \phi_{2} =...= \phi_{\e{4\beta}}) \e{8\beta} \ge C_{\ref{prop:short_distance_probability}} \e{8 \beta}.$$
    Combining yields the desired result.
\end{proof}

\begin{remark}
    Let us briefly discuss how tight the above result is. In order to do this, let us consider the measure $\bbPw_{\beta,T}$ with $W(z,2) = z^2$, which corresponds to the classical one-dimensional Ising model
with nearest-neighbor interaction and zero external field,
\[
H(\sigma) = - \beta \sum_{i=1}^{T-1} \sigma_i \sigma_{i+1},
\qquad \sigma_i \in \{\pm 1\}.
\]
The associated $2\times 2$ transfer matrix is
\[
A=\begin{pmatrix}
e^{\beta } & e^{-\beta } \\
e^{-\beta } & e^{\beta }
\end{pmatrix},
\]
whose eigenvalues are explicitly
\[
\lambda_0 = 2\cosh(\beta ),
\qquad 
\lambda_1 = 2\sinh(\beta ).
\]
Hence,
$\frac{\lambda_1}{\lambda_0} = \tanh(\beta ) =: t$ with
$0 < t < 1$.
In this model the two–point function can be evaluated explicitly to be
\[
\langle \sigma_i \sigma_j\rangle = t^{|i-j|}.
\]
Define the susceptibility
\[
\chi(\beta) 
:= \sum_{k\in\mathbb{Z}} 
\langle \sigma_0 \sigma_k\rangle 
= 1 + 2\sum_{k=1}^\infty t^k
= \frac{1+t}{1-t}.
\]
As $\beta\to\infty$,
\[
t = \tanh(\beta )
= 1 - 2 e^{-2\beta } + o(e^{-2\beta }),
\]
and therefore
\[
\chi(\beta) 
= \frac{1+t}{1-t}
\approx \exp(2\beta ).
\]
Consequently, the double sum of two–point functions satisfies
\[
\sum_{\substack{1\le i,j\le T\\ i\neq j}}
\langle \sigma_i \sigma_j\rangle
= \chi(\beta)\,T + o(T),
\]
and the prefactor grows like $\exp(2\beta )$. Note that similar arguments can be made in case the pair potential has compact support in time (and is non-quadratic) by applying generalized transfer matrix techniques. Therefore,
this reflects that our approach captures the correct first-order behavior of the diffusion constant (note that in our model there is an additional factor of $2$ in front of $\beta$).
\end{remark}

\begin{proof}[Proof of Proposition \ref{prop:odd_interaction}]
By an application of (GKS) it is seen that
$$\bbE^{\bbPh_{\alpha,T}} [\phi_j] \ge \bbE^{\bbP_j} [\phi_j],$$
where
$$\bbP_j (\de \phi_j) \propto \e{ \alpha \phi_j} \nu (\de \phi_j).$$
This simply follows from the fact that the spins satisfy $\phi_k ^2 =1$ and thus every odd degree polynomial taking sums of $\phi_k$ contain at least one factor $\phi_k$.

It is easily seen that $\bbE^{\bbP_j} [\phi_j] = c > 0$.
An application of Jensen's inequality yields
$$\bbE^{\bbPh_{\alpha,T}} [x_T ^2] \ge \lt( \bbE^{\bbPh_{\alpha,T}} [x_T] \rt)^2 \ge T^2  \bbE^{\bbP_j} [\phi_j]^2 \ge c^2 T^2.$$
This shows the claim.
\end{proof}

\section{Recursive variance improvement}
\label{sec:recursive_variance_improvement}

\begin{figure}[t]
    \centering
    \begin{tikzpicture}[>=stealth,scale=0.8]

        \tikzset{
          ball/.style={circle,draw,minimum size=1.3cm,
                       inner sep=0pt,fill=gray!10,font=\small},
          leaf/.style={circle,draw,minimum size=0.6cm,
                       inner sep=0pt,fill=gray!10}
        }

        \def\yroot{0}      
        \def\ylevA{-1.7}   
        \def\ylevB{-3.4}   
        \def\yMid{-5.1}    
        \def\ylevD{-6.8}   

        \node[ball] (root) at (0,\yroot)
        {$\dfrac{1}{\sqrt{T}}x_T$};

        \node[ball] (S1) at (-4,\ylevA) {$\sigma^{1}_{T/2}$};
        \node[ball] (S2) at (4,\ylevA)  {$\sigma^{2}_{T/2}$};

        \draw[->] (root) -- (S1);
        \draw[->] (root) -- (S2);

        \node[ball] (S11) at (-6,\ylevB) {$\sigma^{11}_{T/4}$};
        \node[ball] (S12) at (-2,\ylevB) {$\sigma^{12}_{T/4}$};
        \node[ball] (S21) at ( 2,\ylevB) {$\sigma^{21}_{T/4}$};
        \node[ball] (S22) at ( 6,\ylevB) {$\sigma^{22}_{T/4}$};

        \draw[->] (S1) -- (S11);
        \draw[->] (S1) -- (S12);
        \draw[->] (S2) -- (S21);
        \draw[->] (S2) -- (S22);


        \coordinate (P11) at (-6,\yMid);
        \coordinate (P12) at (-2,\yMid);
        \coordinate (P21) at ( 2,\yMid);
        \coordinate (P22) at ( 6,\yMid);

        \draw[->] (S11) -- (P11);
        \draw[->] (S12) -- (P12);
        \draw[->] (S21) -- (P21);
        \draw[->] (S22) -- (P22);

        \node at (-4,\yMid) {$\cdots$};
        \node at ( 0,\yMid) {$\cdots$};
        \node at ( 4,\yMid) {$\cdots$};

        \foreach \i/\x in {
            1/-7.5,2/-6.5,3/-5.5,4/-4.5,5/-3.5,6/-2.5,7/-1.5,8/-0.5,
            9/ 0.5,10/1.5,11/2.5,12/3.5,13/4.5,14/5.5,15/6.5,16/7.5}
            \node[leaf] (L\i) at (\x,\ylevD) {};

        \draw[->] (P11) -- (L1);
        \draw[->] (P11) -- (L2);
        \draw[->] (P11) -- (L3);
        \draw[->] (P11) -- (L4);

        \draw[->] (P12) -- (L5);
        \draw[->] (P12) -- (L6);
        \draw[->] (P12) -- (L7);
        \draw[->] (P12) -- (L8);

        \draw[->] (P21) -- (L9);
        \draw[->] (P21) -- (L10);
        \draw[->] (P21) -- (L11);
        \draw[->] (P21) -- (L12);

        \draw[->] (P22) -- (L13);
        \draw[->] (P22) -- (L14);
        \draw[->] (P22) -- (L15);
        \draw[->] (P22) -- (L16);

        \draw[red,dashed,thick] (root.south)  -- (0,\ylevD-0.3);

        \draw[red,dashed,thick] (S1.south)    -- (-4,\ylevD-0.3);
        \draw[red,dashed,thick] (S2.south)    -- ( 4,\ylevD-0.3);

        \draw[red,dashed,thick] (S11.south)   -- (-6,\ylevD-0.3);
        \draw[red,dashed,thick] (S12.south)   -- (-2,\ylevD-0.3);
        \draw[red,dashed,thick] (S21.south)   -- ( 2,\ylevD-0.3);
        \draw[red,dashed,thick] (S22.south)   -- ( 6,\ylevD-0.3);

        \node[left] at (-9,\yroot) {$V_n$};
        \node[left] at (-9,\ylevA) {$V_{n-1}$};
        \node[left] at (-9,\ylevB) {$V_{n-2}$};
        \node[left] at (-9,\yMid)  {$\vdots$};
        \node[left] at (-9,\ylevD) {$V_1 = 1$};

    \end{tikzpicture}

    \caption{Dyadic recursion for the normalized endpoint
      \(\frac{1}{\sqrt{T}}x_T\) when \(T = 2^n\). The first levels
      \(\sigma^{1}_{T/2},\sigma^{2}_{T/2}\) and
      \(\sigma^{11}_{T/4},\dots,\sigma^{22}_{T/4}\) are drawn
      explicitly. The horizontal layer of dots indicates further dyadic
      levels before reaching the microscopic blocks at variance \(V_1=1\). Red dashed lines indicate that we can treat variables on separate sides as independent of each other.}
    \label{fig:dyadic-recursion}
\end{figure}
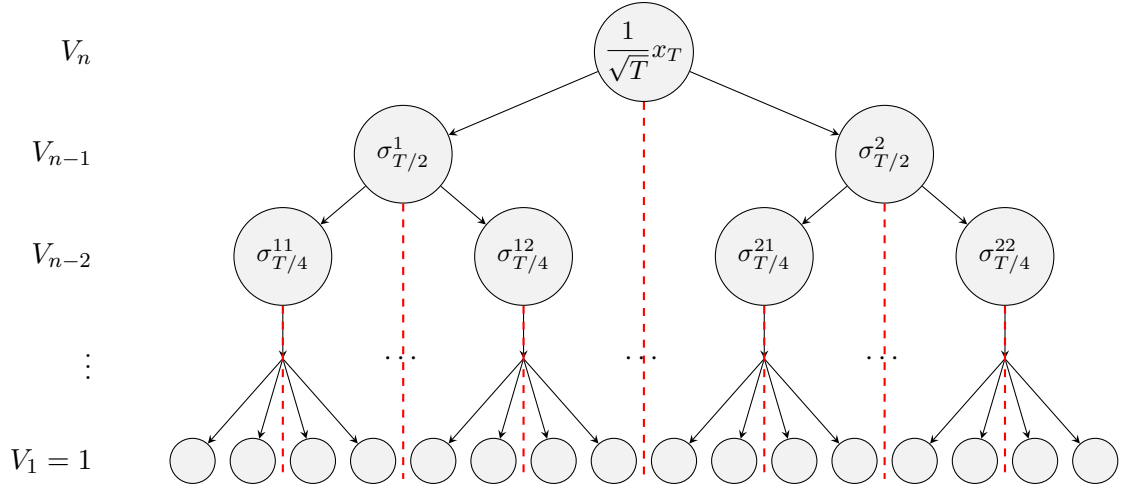

In this Section we prove Theorem~\ref{thm:long_range_superdiffusive_behavior}.  
Throughout we write \(T = 2^t\) with \(t \in \mathbb N\). Let $\xi = \gamma/2 +c$.
Recall that under the tilted measure \(\hat{\bbP}_{\alpha,T,\gamma,\xi}\) from
\eqref{equ:long_range_rw} we are interested in the endpoint variance
\(\bbE^{\hat{\bbP}_{\alpha,T,\gamma,\xi}}[x_T^2]\).
The key observation is that, when \(T\) is dyadic, this quantity can be
controlled recursively in terms of the corresponding variance at time \(T/2\); see also Figure \ref{fig:dyadic-recursion}. For convenience, we will normalize the walk by $T^{-1/2}$, meaning that we study
$$\bbE^{\bbPh_{\alpha,T,\gamma,\xi}}\lt[ \lt( T^{-1/2} x_T  \rt)^2 \rt].$$
This scaling occurs naturally when treating $\xi$ as $\gamma/2 +$(constant).
Split the walk in two halves and normalize them as
\[
    \sigma^1 _{T/2} := \frac{1}{\sqrt{T/2}} \sum_{j=1}^{T/2} \phi_j,
    \qquad
    \sigma^2 _{T/2} := \frac{1}{\sqrt{T/2}} \sum_{j=T/2+1}^{T} \phi_j.
\]
Then, the endpoint \(x_T\) can be written as
\[
    \frac{1}{\sqrt{T}} x_T = \frac{1}{\sqrt{2}} (\sigma^1 _{T/2} + \sigma^2 _{T/2}),
\]
and hence
\[
    \bbE^{\hat{\bbP}_{\alpha,T,\gamma,\xi}}\lt[ \lt( T^{-1/2} x_T  \rt)^2 \rt]
    = \frac 1 2
    \bbE^{\hat{\bbP}_{\alpha,T,\gamma,\xi}}\bigl[(\sigma^1 _{T/2} + \sigma^2 _{T/2})^2\bigr].
\]

The density of \(\hat{\bbP}_{\alpha,T,\gamma,\xi}\) contains three types of
interaction terms:

\begin{enumerate}
    \item interactions internal to the first block \(\{1,\dots,T/2\}\),
    \item interactions internal to the second block \(\{T/2+1,\dots,T\}\),
    \item cross-interactions between the two blocks.
\end{enumerate}

Among the cross-interactions there is a particularly simple and important one,
which is of the form
\begin{equation}
    \label{equ:useful_cross_term}
    \frac{1}{T^c}(\sigma^1 _{T/2} + \sigma^2 _{T/2})^\gamma.
\end{equation}
Here, recall $c = \xi-\gamma/2$.
All other cross-terms are more complicated functions of the increments, but
they only increase the covariance between $\sigma^1 _{T/2}$ and $\sigma^2 _{T/2}$.
By an application of Lemma \ref{lemma:omitting_interactions}, discarding positive cross-terms can only decrease
correlations between the two blocks. In particular, if we retain only the
contribution~\eqref{equ:useful_cross_term}, we obtain a measure
\(\bbP^* _{\beta,\gamma}\) on a pair \((\sigma^1 _{T/2},\sigma^2 _{T/2})\) such that
$$\bbE^{\hat{\bbP}_{\alpha,T,\gamma,\xi}}[x_T ^2]
\ge
\frac 1 2\bbE^{\bbP^*_{\alpha/T^c,\gamma}}[ (y+z) ^2],$$
with
\begin{equation}
    \label{eq:two-block-measure}
    \bbP^* _{\beta,\gamma}(\de y \,\de z)
    \propto
    \exp\!\left(
        \beta\,(y+z)^\gamma
    \right)
    \,\theta(\de y)\,\theta(\de z).
\end{equation}
Here \(\theta\) denotes the law of \(\sigma^1 _{T/2}\) under
\(\hat{\bbP}_{\alpha,T/2,\gamma,\xi}\). In particular, \(\theta\) is symmetric, has second moment $V>0$ and is supported on a finite set.

Therefore, the variance at scale \(T = 2^t\) is bounded
from below by the second moment of the sum of two i.i.d.\ random variables
under an exponential tilt of the form \(\exp(\beta (y+z)^\gamma)\) for some
\(\beta > 0\). By assumption $\gamma \in 2\bbN$, which means by another application of (GKS) we can w.l.o.g. assume $\gamma=2$ (see Lemma \ref{lemma:gamma_two_suffices}).
Expanding the square $(\sigma^1 _{T/2}+\sigma^2 _{T/2})^2$ and by symmetry of
\(\theta\), the marginal terms \((\sigma^1 _{T/2})^2\) and \((\sigma^2 _{T/2})^2\) can once again be omitted in~\eqref{eq:two-block-measure} by invoking Lemma \ref{lemma:omitting_interactions}. The nontrivial
effect of the tilt is therefore to create a positive covariance term \(\sigma^1 _{T/2}\sigma^2 _{T/2}\).
We therefore reduce the problem to
a simpler two-variable tilt
\begin{equation}
    \label{eq:AB-tilt-def}
    \bbP_{\beta,2}(\de y\,\de z)
    \propto
    \exp(\beta yz)\,\theta(\de y)\,\theta(\de z),
    \qquad \beta > 0,
\end{equation}
where \(\theta\) is chosen as before.
In terms of this measure,
\eqref{eq:two-block-measure} implies
\begin{equation}
    \label{eq:variance-from-AB}
    \begin{split}
        2 \bbE^{\hat{\bbP}_{\alpha,T,\gamma,\xi}}\lt[ \lt( T^{-1/2} x_T  \rt)^2 \rt]
    \;&\ge\;
    \bbE^{\bbP_{\beta,2}}[(\sigma^1 _{T/2}+\sigma^2 _{T/2})^2] \\
    \;&=\; \bbE^{\bbP_{\beta,2}}[(\sigma^1 _{T/2})^2] + \bbE^{\bbP_{\beta,2}}[(\sigma^2 _{T/2})^2] + 2\bbE^{\bbP_{\beta,2}}[\sigma^1 _{T/2}\sigma^2 _{T/2}].
    \end{split}
\end{equation}
Symmetry and the construction of \(\bbP_{\beta,2}\) ensure that
\(\bbE^{\bbP_{\beta,2}}[(\sigma^1 _{T/2})^2] = \bbE^{\bbP_{\beta,2}}[(\sigma^2 _{T/2})^2] \ge V\), so the crucial quantity
to bound from below is the covariance term \(\bbE^{\bbP_{\beta,2}}[\sigma^1 _{T/2}\sigma^2 _{T/2}]\).

When \(\beta = 0\) we recover the trivial bound
\(\bbE^{\bbP_{\beta,2}}[\sigma^1 _{T/2}\sigma^2 _{T/2}] = 0\), so~\eqref{eq:variance-from-AB} simply returns the
variance \(V\) at the previous scale.
For \(\beta > 0\) we expect \(\bbE^{\bbP_{\beta,2}}[\sigma^1 _{T/2}\sigma^2 _{T/2}] > 0\) and hence
a genuine increase in variance.
The following Lemma provides a quantitative lower bound that depends only on
the variance \(V\) and not on any further features of \(\theta\).

\begin{lemma}
    \label{lemma:AB-tanh_bound}
    Let \(\theta\) be a symmetric probability measure on \(\bbR\) with
    variance \(V\) supported on a finite set $\Omega$, and let \(\bbP_{\beta,2}\) be the tilted product measure
    defined in~\eqref{eq:AB-tilt-def}. Suppose that \(\beta V \le 2\).
    Then
    \[
        \bbE^{\bbP_{\beta,2}}[yz] \;\ge\; V\,\tanh(\beta V).
    \]
\end{lemma}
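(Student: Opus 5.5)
The plan is to show that, among all symmetric laws with variance $V$, the tilted covariance $\bbE^{\bbP_{\beta,2}}[yz]$ is smallest when $\theta$ is the symmetric two-point law on $\pm\sqrt V$. For that law a direct computation gives exactly $V\tanh(\beta V)$. We work with the partition function
$$Z(\beta)=\iint e^{\beta yz}\,\theta(\de y)\,\theta(\de z),$$
which is finite and analytic in $\beta$ because $\Omega$ is finite. Then $\bbE^{\bbP_{\beta,2}}[yz]=\frac{\de}{\de\beta}\log Z(\beta)$. Expanding the exponential and integrating term by term (Tonelli, or simply finiteness of $\Omega$) gives
$$Z(\beta)=\sum_{k\ge0}\frac{\beta^k m_k^2}{k!},\qquad m_k:=\int y^k\,\theta(\de y).$$
By symmetry of $\theta$ all odd moments vanish, so
$$Z(\beta)=\sum_{j\ge0} a_j\beta^{2j},\qquad a_j=\frac{m_{2j}^2}{(2j)!}.$$

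Next, compare with the Rademacher case. Write $a_j=a_j^0\,w_j$, where $a_j^0=V^{2j}/(2j)!$ are the coefficients for $\theta_0=\frac12(\delta_{\sqrt V}+\delta_{-\sqrt V})$, and $w_j=r_j^2$ with $r_j:=m_{2j}/V^j$. Note that $\sum_j a_j^0\beta^{2j}=\cosh(\beta V)$, whose logarithmic derivative is $V\tanh(\beta V)$.

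The key step is that $j\mapsto r_j$ is nondecreasing. By Cauchy--Schwarz, $m_{2j}^2\le m_{2j-2}\,m_{2j+2}$, so the sequence $j\mapsto m_{2j}$ is log-convex. Hence the ratios $m_{2j+2}/m_{2j}$ are nondecreasing in $j$, and they are bounded below by $m_2/m_0=V$. This gives $r_{j+1}/r_j=m_{2j+2}/(V m_{2j})\ge1$, and therefore $w_j$ is nondecreasing as well.

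Now rewrite the logarithmic derivative as an expectation:
$$\frac{Z'(\beta)}{Z(\beta)}=\frac1\beta\,\frac{\sum_j 2j\,a^0_j w_j\beta^{2j}}{\sum_j a^0_j w_j \beta^{2j}}.$$
This is $\frac1\beta$ times the mean of $2j$ under the probability weights $p_j\propto a^0_j\beta^{2j}w_j$ on $\bbN_0$. The weights $p^0_j\propto a^0_j\beta^{2j}$ correspond to $\theta_0$. Since $p_j/p^0_j\propto w_j$ is nondecreasing, $p$ dominates $p^0$ in the monotone likelihood ratio order, hence stochastically. The mean of the increasing function $j\mapsto 2j$ is therefore at least as large under $p$ as under $p^0$. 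This can be proved directly via the Chebyshev sum inequality, that is, positive covariance of two increasing functions of $j$ under $p^0$. We conclude
$$\bbE^{\bbP_{\beta,2}}[yz]\ge \frac{\de}{\de\beta}\log\cosh(\beta V)=V\tanh(\beta V).$$

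The main point of care is the monotonicity of $r_j$ and the justification of the series manipulations; the latter are trivial since $\Omega$ is finite and all series converge absolutely. This route does not use the hypothesis $\beta V\le 2$, which is harmless. If the monotone-ratio comparison turned out to have a gap, the fallback would be to differentiate in $\beta$, using that the derivative of $\bbE^{\bbP_{\beta,2}}[yz]$ is $\mathrm{Var}_\beta(yz)$. One would then compare with the Rademacher ODE $g'=V^2-g^2$ on the range $\beta V\le2$, which is where that hypothesis would enter.
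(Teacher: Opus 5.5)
Your proof is correct, and it takes a genuinely different route from the paper's; in one respect it proves more.

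\textbf{The paper's route.} The paper does not use the product structure. It treats $S=yz$ as a single symmetric random variable with $\bbE S^2=V^2$. It minimizes $\bbE[Se^{\beta S}]/\bbE[e^{\beta S}]$ over all such laws on a finite symmetric set. A Krein--Milman extreme-point argument reduces this to laws with at most four atoms $\{\pm a,\pm b\}$. It then applies Jensen to $\varphi(t)=k(\sqrt t)$, where $k(x)=x\sinh(\beta x)-V\tanh(\beta V)\cosh(\beta x)$. Convexity of $\varphi$ is checked through the signs of $k''$ and $k'''$, and this is exactly where the hypothesis $\beta V\le 2$ is used.

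\textbf{Your route.} You use that $\bbP_{\beta,2}$ tilts a product measure, so the partition function has coefficients $m_{2j}^2/(2j)!$. The comparison with $\cosh(\beta V)$ then reduces to two facts. First, $j\mapsto m_{2j}/V^j$ is nondecreasing; this follows from $m_{2j}^2\le m_{2j-2}m_{2j+2}$, or directly from $\bbE[y^{2j+2}]\ge\bbE[y^2]\,\bbE[y^{2j}]$. Second, a one-dimensional Chebyshev (likelihood-ratio) inequality on the index $j$. The series manipulations are harmless because $\Omega$ is finite, and $m_{2j}>0$ because $V>0$. The ODE fallback you sketch is therefore not needed.

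\textbf{What each approach buys.} Your argument is shorter and avoids the convex-geometry reduction. More importantly, it removes the hypothesis $\beta V\le 2$ altogether, confirming the paper's remark that this condition is artificial. In Section~\ref{sec:recursive_variance_improvement} this would let one drop the truncation $\min\{\cdot,2\}$ in the recursion \eqref{eq:Vn-recursion}, so the saturation factor $1+\tanh 2$ becomes $2$. The paper's route has a different advantage: its bound holds for every symmetric $S$ with $\bbE S^2=V^2$, not only for products of independent symmetric factors, though only in the restricted range of $\beta V$.
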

As far as we can tell, the condition $\beta V \le 2$ is artificial and only of technical nature. On the other hand, we can assume it to be the case by an application of Lemma \ref{lemma:omitting_interactions}.
The proof of Lemma \ref{lemma:AB-tanh_bound} will be given in Section \ref{sec:convex_geometry_proof}.

Putting all of this together, we find
$$\bbE^{\bbPh_{\alpha,T,\gamma,\xi}}\lt[ \lt( T^{-1/2} x_T  \rt)^2 \rt] \ge  \big( 1+\tanh(\min \{2,\alpha V_{T/2}/T^c\}) \big)V_{T/2}.$$
We can now iterate: by exactly the same argument, it is possible to lower bound $V_{T/2}$ in terms of $V_{T/4}$, corresponding to the random variables $$\sigma^{11}_{T/4} = \sqrt{\frac{4}{T}} \sum\limits_{j=1} ^{T/4} \phi_j,$$
$$\sigma^{12}_{T/4} = \sqrt{\frac{4}{T}} \sum\limits_{j=T/4 +1} ^{T/2} \phi_j$$
and similar definitions for 
$\sigma^{21}_{T/4}$ and $\sigma^{22}_{T/4}$. Writing the recursion in terms of an iteration variable $n \in \bbN$, the question about superdiffusivity can be phrased in terms of whether $V_n$ tends to infinity. Recall
\begin{equation}\label{eq:csat-def}
  c_{\mathrm{crit}} = \log_2\bigl(1+\tanh 2\bigr)\in(0,1).
\end{equation}

\begin{lemma}
    \label{lemma:Vn_growth}
    Fix parameters $\alpha>0$ and $c < c_\mathrm{crit}$, and take the recursion
\begin{equation}\label{eq:Vn-recursion}
  V_1 = 1,\qquad
  V_{n+1} = \bigl(1 + \tanh(\min(\alpha 2^{-cn}V_n,2))\bigr)\,V_n,
  \quad n\ge 1.
\end{equation}
Then, there exists $\alpha_*(c) > 0$ such that
for $\alpha \in (\alpha_* (c),\infty)$, $V_n \to \infty$.

\end{lemma}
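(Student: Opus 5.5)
The idea is to show that once the tilt argument $u_n := \alpha 2^{-cn} V_n$ reaches the saturation level $2$, it stays there forever. After that point the recursion is simply $V_{n+1} = (1+\tanh 2)V_n$, which grows like $2^{c_{\mathrm{crit}} n}$ and beats the decay $2^{-cn}$. The whole task is therefore to choose $\alpha_*(c)$ so that saturation occurs at the first step and is preserved.

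\emph{Invariance of saturation.} Since $\tanh\ge 0$, the sequence $(V_n)$ is nondecreasing. Suppose that for some $n$ we have $u_n \ge 2$. Then $V_{n+1} = (1+\tanh 2)V_n = 2^{c_{\mathrm{crit}}}V_n$. It follows that
\[
u_{n+1} = \alpha 2^{-c(n+1)} V_{n+1} = 2^{c_{\mathrm{crit}}-c}\, u_n \ge u_n \ge 2,
\]
because $c < c_{\mathrm{crit}}$. By induction, $u_m \ge 2$ for all $m \ge n$. Hence
\[
V_m = 2^{c_{\mathrm{crit}}(m-n)} V_n \to \infty .
\]

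\emph{Starting the saturation.} With $V_1 = 1$ we have $u_1 = \alpha 2^{-c}$. So $u_1 \ge 2$ as soon as $\alpha \ge 2^{c+1}$, and any $\alpha_*(c) \ge 2^{1+c}$ works.

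The threshold can be sharpened, which I expect is how the paper arrives at $\alpha_*(c) = 2^c\operatorname{arctanh}(2^c-1)$. The point is not to demand saturation immediately, but only that $u_n$ be nondecreasing. In the unsaturated regime,
\[
\frac{u_{n+1}}{u_n} = 2^{-c}\bigl(1+\tanh u_n\bigr),
\]
which is at least $1$ iff $\tanh u_n \ge 2^c - 1$, i.e.\ iff $u_n \ge \operatorname{arctanh}(2^c-1)$. This quantity is finite because $2^c - 1 < \tanh 2 < 1$. So if
\[
u_1 = \alpha 2^{-c} > \operatorname{arctanh}(2^c-1),
\]
the sequence $u_n$ is nondecreasing while unsaturated. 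Moreover, the ratio is bounded below by $\delta := 2^{-c}(1+\tanh u_1) > 1$ by monotonicity of $\tanh$. Thus $u_n \ge \delta^{n-1}u_1$ grows geometrically and reaches $2$ after finitely many steps, after which the invariance step applies. The strict inequality $\alpha > \alpha_*(c)$ is used exactly to get $\delta > 1$.

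The only delicate point is this borderline strictness and checking that $2^c - 1$ lies in $(0,\tanh 2)$, so that the arctanh is well-defined. The interval condition is equivalent to $c \in (0, c_{\mathrm{crit}})$. Everything else is an elementary monotone induction.
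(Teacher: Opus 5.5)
Your proposal is correct and, in its sharpened form, is essentially the paper's argument: rescale to $u_n=\alpha 2^{-cn}V_n$ so that $u_{n+1}=2^{-c}\bigl(1+\tanh(\min(u_n,2))\bigr)u_n$, where the factor is increasing in $u_n$ and saturates at $2^{-c}(1+\tanh 2)>1$, and take $\alpha_*(c)=2^c\operatorname{arctanh}(2^c-1)$ so that this factor exceeds $1$ from the first step. You also spell out two points the paper only asserts, namely that saturation persists once reached and that the uniform ratio $\delta>1$ forces $u_n$ to reach $2$ in finitely many steps; in addition, your cruder threshold $2^{1+c}$ is positive even when $c\le 0$, which the paper's $\alpha_*(c)$ is not.
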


\begin{proof}
Fix \( c < c_{\mathrm{crit}} \), and define the rescaled variable
\[
y_n := \alpha\,2^{-cn} V_n.
\]
Then the recursion becomes
\[
y_{n+1} = 2^{-c} \big(1 + \tanh(\min(y_n,2))\big) y_n.
\]
Define
\[
F_c(y) := 2^{-c}(1 + \tanh(\min(y,2)))\,y,
\qquad
g_c(y) := \frac{F_c(y)}{y} = 2^{-c}(1 + \tanh(\min(y,2))).
\]
Then \( y_{n+1} = F_c(y_n) = g_c(y_n)\,y_n \). Since \( \tanh \) is increasing and bounded by 1, \( g_c(y) \) is increasing in \( y \) with
\[
g_c(0^+) = 2^{-c}, \quad g_c(y) \nearrow g_c^{\mathrm{sat}} := 2^{-c}(1 + \tanh 2).
\]
By definition of \( c_{\mathrm{crit}} \), we have \( g_c^{\mathrm{sat}} > 1 \).
Define the critical value
\[
\alpha_*(c) := 2^c \cdot \operatorname{arctanh}(2^c - 1),
\]
which is the unique value such that
\(g_c\big(\alpha_* 2^{-c}\big) = 1.\)
By assumption $\alpha > \alpha_*(c)$, which implies
\( y_1 > \alpha_* 2^{-c} \), so \( g_c(y_1) > 1 \) and \( y_2 > y_1 \). The sequence \( y_n \) increases until it exceeds 2, after which
\[
y_{n+1} = g_c^{\mathrm{sat}} y_n,
\]
with \( g_c^{\mathrm{sat}} > 1 \). So \( y_n = C (g_c ^\mathrm{sat})^n \) for some constant $C>0$, and \( V_n = \alpha^{-1} 2^{cn} y_n \).
\end{proof}

\begin{proposition}
    \label{prop:gamma-square-polynomial-positivity}
    Let $\Phi = \frac{1}{\sqrt{N}} \sum\limits_{j=1} ^N \phi_j$. Then, for $\gamma = 2k$ with $k \in \bbN$ and $N \in \bbN$,
    $\Phi^\gamma - \Phi^2$ is a nonnegative linear combination of even spin monomials of the spins $\phi_j$ ($1 \le j \le N$).
\end{proposition}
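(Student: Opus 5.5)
The plan is to work with $\pm1$ spins and reduce everything to the algebra of multilinear monomials in $\phi_1,\dots,\phi_N$. By Wells' inequality (Theorem \ref{thm:wells_inequality}) and the convention adopted right after it, $\bbP$ is the simple random walk, so $\phi_j^2=1$. Every polynomial in the spins can then be reduced to a multilinear polynomial $\sum_{I\subseteq[N]} c_I \prod_{i\in I}\phi_i$. By an ``even spin monomial'' I mean a $\prod_{i\in I}\phi_i$ with $|I|$ even, the empty product (the constant $1$) included. The key structural fact to record first is that the set $\mathcal C$ of multilinear polynomials with nonnegative coefficients supported on even $|I|$ is closed under products. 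Indeed, $\prod_{i\in I}\phi_i\cdot\prod_{j\in J}\phi_j=\prod_{i\in I\triangle J}\phi_i$ after using $\phi_i^2=1$, and $|I\triangle J|=|I|+|J|-2|I\cap J|$ is even. No sign is introduced and coefficients simply add, so nonnegativity is preserved.

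Next I expand the base case explicitly:
\[
\Phi^2=\frac1N\sum_{a,b}\phi_a\phi_b = 1+\frac{2}{N}\sum_{1\le a<b\le N}\phi_a\phi_b =: 1+Q ,
\]
and $Q\in\mathcal C$. For $\gamma=2k$ this gives $\Phi^{\gamma}=(1+Q)^k$. By the binomial theorem,
\[
\Phi^{\gamma}-\Phi^2=(1+Q)^k-(1+Q)=(k-1)Q+\sum_{m=2}^{k}\binom{k}{m}Q^m .
\]
Each $Q^m$ lies in $\mathcal C$ by the closure property, and the binomial coefficients and $k-1\ge0$ are nonnegative. Hence $\Phi^\gamma-\Phi^2\in\mathcal C$, which is the claim. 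The case $k=1$ is trivial, since the difference is zero.

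There is no serious obstacle here; the only point needing care is being explicit about the identification modulo $\phi_j^2=1$. That identification is legitimate because we work with Rademacher spins after Wells' inequality. The positivity is then meant as a statement about the reduced multilinear expansion, which is exactly the form in which the result is fed into Lemma \ref{lemma:omitting_interactions} and Theorem \ref{thm:gks}. Constant terms may be counted as the empty even monomial; they are harmless, since they cancel in the normalization of the Gibbs measure.
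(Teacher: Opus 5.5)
Your proof is correct and is essentially the paper's argument. The paper writes $\Phi^{2k}=N^{-k}(S^2)^{k-1}S^2$ with $S=\sum_j\phi_j$ and splits $(S^2)^{k-1}=N^{k-1}+(\text{nonnegative even terms})$; this is exactly your factorization $(1+Q)^{k-1}(1+Q)$ with $(1+Q)^{k-1}=1+(\text{nonnegative even terms})$. Your explicit closure of $\mathcal C$ under products via $I\triangle J$, after reducing with $\phi_j^2=1$, makes rigorous the paper's brief remark that ``no cancellations can occur.''
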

\begin{proof}
    Abbreviate $S=\sum_{j=1} ^N \phi_j$.
    Note that $\Phi^\gamma$ is already a sum of functions that we care about. Therefore, it suffices to show that this sum contains $\frac{1}{N} \phi_i\phi_j$ for $i \neq j$.
    This is clear by writing 
    \[
    \Phi^\gamma = \frac{1}{N^{k}} \underbrace{S^2\cdot ... \cdot S^2 } _{k-1 \text{ times}}\sum\limits_{i, j} \phi_i \phi_j.
    \]
    Since $S^2 = N + \sum_{a \neq b}\phi_a\phi_b$, the constant term of $(S^2)^{k-1}$
    is $N^{k-1}$. Hence, when multiplying $S^{2k-2}$ by
    $\sum_{i, j}\phi_i\phi_j$, the coefficient of each $\phi_i\phi_j$ (with $i \neq j$)
    is at least $N^{k-1}$. As all other terms have non-negative coefficients no cancellations can occur. This shows the claim.
\end{proof}

The previous Proposition allows us to switch from general $\gamma$ to $\gamma = 2$. 
\begin{lemma}   
    \label{lemma:gamma_two_suffices}
    Let $\gamma = 2k$ with $k \in \bbN$. Moreover, let $\xi = \gamma/2 + c$ with $c> 0$. Then,
    $$\bbE^{\bbPh_{\alpha,T,\gamma,\xi}} [x_T^2] \ge \bbE^{\bbPh_{\alpha,T,2,1+c}} [x_T ^2].$$
\end{lemma}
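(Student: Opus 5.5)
We compare the two densities term by term and then apply Lemma \ref{lemma:omitting_interactions}. As in the rest of this section, we work with the one-dimensional simple random walk with $\phi_j\in\{\pm1\}$, which is justified by Corollary \ref{cor:1d_suffices} and Theorem \ref{thm:wells_inequality}.

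Fix a pair $0\le i<j\le T$ and set $N=j-i$ and $\Phi=N^{-1/2}\sum_{m=i+1}^{j}\phi_m$. The interaction of this pair in $\bbPh_{\alpha,T,\gamma,\xi}$ is
$$\alpha\,\frac{(x_j-x_i)^{2k}}{N^{k+c}}=\alpha\,\frac{\Phi^{2k}}{N^{c}}.$$
The corresponding term in $\bbPh_{\alpha,T,2,1+c}$ is
$$\alpha\,\frac{(x_j-x_i)^2}{N^{1+c}}=\alpha\,\frac{\Phi^2}{N^{c}}.$$
By Proposition \ref{prop:gamma-square-polynomial-positivity}, the difference $\alpha N^{-c}(\Phi^{2k}-\Phi^{2})$ is a nonnegative linear combination of even spin monomials in $\phi_{i+1},\dots,\phi_j$. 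We expand the spins by inclusion, so that $\phi_m^2=1$. Constants can be absorbed into the normalization.

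Summing over all pairs gives
$$\frac{\de\bbPh_{\alpha,T,\gamma,\xi}}{\de\bbPh_{\alpha,T,2,1+c}}\propto\exp(H),$$
where $H$ is a nonnegative combination of monomials in the spins. To apply Lemma \ref{lemma:omitting_interactions}, we view both measures as products of single-site laws tilted by interactions indexed by a family $M\supseteq M'$. The base family $M'$ carries the $\gamma=2$ terms. The larger family $M$ carries, in addition, each extra monomial of $H$ as its own interaction $H_I$, where $I$ is its support. Formally, the lemma is stated with $H_I(\phi_I)$, a function of the sum over $I$. Its proof, however, only uses that the exponent is a nonnegative combination of products of spins, via Theorem \ref{thm:gks}. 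Alternatively, we can argue directly as in Remark \ref{rem:density_replacement}: expand $\exp(H)$ as a series and apply Theorem \ref{thm:gks} under $\bbPh_{\alpha,T,2,1+c}$ to $f=x_T^2$ and $g=H^n$.

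It remains to check that $x_T^2$ is an admissible test function. Indeed, $x_T^2=\sum_{a,b}\phi_a\phi_b$ is a positive combination of products of odd increasing functions of single spins, so each summand lies in $\caF_T$. Theorem \ref{thm:gks} then gives
$$\bbE^{\bbPh_{\alpha,T,2,1+c}}[x_T^2\,e^{H}]\ge \bbE^{\bbPh_{\alpha,T,2,1+c}}[x_T^2]\,\bbE^{\bbPh_{\alpha,T,2,1+c}}[e^{H}].$$
Dividing by $\bbE^{\bbPh_{\alpha,T,2,1+c}}[e^{H}]$ yields the claim. Tonelli's theorem justifies exchanging sum and integral, since all quantities are finite sums on a finite space.

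The main obstacle is the bookkeeping that makes $H$ genuinely a nonnegative polynomial in the spins. This needs two checks. First, Proposition \ref{prop:gamma-square-polynomial-positivity} must be applied to each pair $(i,j)$ with the correct normalization $N=j-i$, so that the factors $N^{-c}$ match exactly between the two measures. Second, the reduction $\phi_m^2=1$ must leave only constants and products of distinct spins with nonnegative coefficients. Once this is in place, the GKS step is routine.
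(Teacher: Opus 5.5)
Your proposal is correct and is essentially the paper's proof: each pair term becomes $\alpha N^{-c}\Phi^{2k}$ versus $\alpha N^{-c}\Phi^{2}$, Proposition \ref{prop:gamma-square-polynomial-positivity} makes the Radon--Nikodym derivative the exponential of a nonnegative spin polynomial, and GKS finishes. Your direct series argument with $f=x_T^2$ in Theorem \ref{thm:gks} is a more careful version of the paper's appeal to Lemma \ref{lemma:omitting_interactions}, since $\Phi^{2k}-\Phi^2$ does not literally fit that lemma's $H_I(\phi_I)$ form until $\phi_m^2=1$ is used. One caveat: because the argument, like the paper's, relies on $\phi_m^2=1$, the lemma is a statement about the one-dimensional simple random walk (the section's standing assumption), and Corollary \ref{cor:1d_suffices} and Theorem \ref{thm:wells_inequality} do not by themselves transfer a comparison between two different interactions to general step laws.
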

\begin{proof}
    Note that the Radon-Nikodym derivative
    $ \de \bbPh_{\alpha,T,\gamma,\xi} / \de \bbPh_{\alpha,T,2,1+c} $ is proportional to the exponential of sums of the form
    $$\beta \lt( \frac{1}{\sqrt{n}} \sum\limits_{j=\ell} ^{\ell+n} \phi_j \rt)^\gamma - \beta  \lt( \frac{1}{\sqrt{n}} \sum\limits_{j=\ell}^ {\ell+n} \phi_j \rt)^2.$$
    Here, $\ell,n \le T$ such that $\ell+n \le T$.
    By Proposition \ref{prop:gamma-square-polynomial-positivity} this difference is the sum of products of $\phi_j$ for $j \in \ell,...,\ell+n$. The result follows from the GKS inequality by using Lemma \ref{lemma:omitting_interactions}.
\end{proof}

The proof of Theorem \ref{thm:long_range_superdiffusive_behavior} follows by putting the previous three Lemmas together and implementing the previously described strategy.

\begin{proof}[Proof of Theorem \ref{thm:long_range_superdiffusive_behavior}]
 By the previous Lemma we find  
    $$\bbE^{\hat{\bbP}_{\alpha,T,\gamma,\xi}}\lt[ \lt( T^{-1/2} x_T  \rt)^2 \rt] \ge \bbE^{\hat{\bbP}_{\alpha,T,2,1+c}}\lt[ \lt( T^{-1/2} x_T  \rt)^2 \rt].$$
    Next, an application of (GKS) yields
    $$\bbE^{\hat{\bbP}_{\alpha,T,\gamma,\xi}}\lt[ \lt( T^{-1/2} x_T  \rt)^2 \rt] \ge  \bbE^{\hat{\bbP}_{\alpha,T/2,2,1+c}} \lt[ (\sigma_{T/2} ^{1})^2 \rt] + \bbE^{\hat{\bbP}_{\alpha,T,2,1+c}} \lt[ \sigma_{T/2} ^{1} \sigma_{T/2} ^{2} \rt].$$
    Take
    $$\bbPw_{\mathrm{split,T}}(\de x) \propto \exp \lt(\alpha \sum\limits_{i < j, j \le T/2} \frac{(x_j - x_i)^2}{|j-i|^{1+c}} + \alpha\sum\limits_{i < j, i \ge T/2+1} \frac{(x_j - x_i)^2}{|j-i|^{1+c}} + \frac{\alpha (\sigma_{T/2} ^2 + \sigma_{T/2} ^1 )^2}{T^c} \rt)\bbP(\de x).$$
    Let $V := \bbE^{\bbPh_{\alpha,T,2,1+c}}[ (\sigma_{T/2} ^{1})^2]$.
    Apply Lemma \ref{lemma:AB-tanh_bound} to find that
    $$\bbE^{\hat{\bbP}_{\alpha,T,2,1+c}} \lt[ \sigma_{T/2} ^{1} \sigma_{T/2} ^{2} \rt] \ge \bbE^{\bbPw_{\mathrm{split,T}}}[\sigma_{T/2} ^{1} \sigma_{T/2} ^{2}] \ge V \tanh \lt(\frac{\alpha}{T^c}V \rt).$$
    In order to calculate $V$, we again write
    $$\bbE^{\bbPh_{\alpha,T/2,2,1+c}}[(\sigma_{T/2} ^{1})^2] \ge \bbE^{\bbPh_{\alpha,T/4,2,1+c}} [(\sigma_{T/4} ^{11})^2]  + \bbE^{\bbPh_{\alpha,T/2,2,1+c}}[(\sigma_{T/4} ^{11})(\sigma_{T/4} ^{12})].$$
    Similar to before, we again apply Lemma \ref{lemma:AB-tanh_bound} to the measure which treats both r.v. as independent with an exponential tilt. Recursing this is exactly what is studied in Lemma \ref{lemma:Vn_growth}, yielding the result.
\end{proof}

\section{Symmetric laws under a one–variable tilt}
\label{sec:convex_geometry_proof}
The goal of this Section is to prove Lemma \ref{lemma:AB-tanh_bound}.
Recall that in the recursive scheme of Section~\ref{sec:recursive_variance_improvement},
we encountered a \emph{symmetric} random variable
$S = xy$ with variance $V^2$ (our random variables were symmetric i.i.d. with variance $V$)
and we wish to understand the behavior of the expectation
\[
    \mathbb{E}^{\bbP_\beta}[S],\qquad \beta>0,
\]
under the exponentially tilted probability measure
\begin{equation}\label{eq:S-tilt-law}
    \mathbb{P}_{\beta}(\mathrm{d}s)
    \propto 
    e^{\beta s}\,\mathbb{P}(S\in\mathrm{d}s).
\end{equation}
Our goal is to find a \emph{universal lower bound} on $\mathbb{E}^{\bbP_\beta}[S]$ 
that depends only on the variance $V^{2}$ and holds for \emph{every} symmetric law of variance $V^{2}$.

Even if $S$ is supported on a finite, symmetric set $\Omega$, this problem is high–dimensional in principle, since $\Omega$ might be large and the law of $S$ arbitrary.  
We now show that the extremal structure of the feasible set allows one to reduce the analysis to a \emph{four–point} measure, i.e. a measure with $4$ atoms at most. 

In all that follows, we work on the space of probability measures $\caP_1 (\Omega)$ on the discrete, finite set $\Omega$ so that $x \in \Omega \iff -x \in \Omega$. Next, take 
\[
    \mathcal{S}_{V^{2}}
    :=
    \Bigl\{
        \mu \in \caP_1 (\Omega) \text{  is symmetric and}
        \int x^{2}\,\mu(\mathrm{d}x) = V^{2}
    \Bigr\}.
\]
This is a weakly compact and convex set.
For $\mu\in\mathcal{S}_{V^{2}}$ define the affine functionals
\[
    L_{1}(\mu) := \int x\, e^{\beta x}\,\mu(\mathrm{d}x),\qquad
    L_{2}(\mu) := \int e^{\beta x}\,\mu(\mathrm{d}x),
\]
so that
\[
    \mathbb{E}^{\mu_\beta}[S]
    = \frac{L_{1}(\mu)}{L_{2}(\mu)}.
\]
The following Lemma is classical. We provide a proof for the reader's convenience.

\begin{lemma}\label{lem:ratio-extreme-attained}
Let $K$ be a nonempty compact convex subset of a locally convex topological
vector space, and let $L_1,L_2:K\to\mathbb{R}$ be continuous affine maps with
$L_2(x)>0$ for all $x\in K$. Set
\[
    F(x):=\frac{L_1(x)}{L_2(x)},\qquad x\in K.
\]
Then $F$ attains its minimum on $K$ at an extreme point of $K$.
\end{lemma}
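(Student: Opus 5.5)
We plan to prove Lemma \ref{lem:ratio-extreme-attained} in two steps. First we show that the minimum is attained. Then we show that the set of minimizers is a face of $K$, so it contains an extreme point by Krein--Milman.

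\emph{Existence.} Since $L_1,L_2$ are continuous and $L_2>0$ on $K$, the quotient $F$ is continuous on the compact set $K$. Hence $m:=\min_K F$ exists and is attained.

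\emph{Linearizing the problem.} Consider the continuous affine map $G(x):=L_1(x)-mL_2(x)$. For every $x\in K$ we have $F(x)\ge m$, and $L_2(x)>0$, so $G(x)\ge 0$. Moreover $G(x)=0$ exactly when $F(x)=m$. The minimizer set
\[
M:=\{x\in K: F(x)=m\}=\{x\in K: G(x)=0\}
\]
is therefore nonempty, closed in $K$, hence compact, and convex because $G$ is affine.

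\emph{$M$ is a face of $K$.} Suppose $x=\lambda u+(1-\lambda)v\in M$ with $u,v\in K$ and $\lambda\in(0,1)$. Affinity gives
\[
0=G(x)=\lambda G(u)+(1-\lambda)G(v).
\]
Both terms are nonnegative, so $G(u)=G(v)=0$, that is, $u,v\in M$.

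\emph{Conclusion.} By the Krein--Milman theorem, which holds in the locally convex setting, the nonempty compact convex set $M$ has an extreme point $e$. Since $M$ is a face of $K$, $e$ is also an extreme point of $K$. To see this, suppose $e=\lambda u+(1-\lambda)v$ with $u,v\in K$ and $\lambda\in(0,1)$. The face property forces $u,v\in M$, and extremality of $e$ in $M$ then gives $u=v=e$. So $F$ attains its minimum $m$ at the extreme point $e$ of $K$.

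\emph{Potential obstacles.} The only step needing care is the appeal to Krein--Milman, which requires local convexity and the Hausdorff property. In the paper's application the space is that of measures on the finite set $\Omega$, i.e.\ $\bbR^{|\Omega|}$. There $K$ is a polytope, so one can bypass the general theorem altogether: take a minimizer of $G$ on $M$ that is a vertex of the polytope $M$, which exists since $M$ is a compact polyhedron.
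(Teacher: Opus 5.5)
Your proof is correct and follows the same route as the paper. Both arguments get existence from continuity and compactness, show that the set $M$ of minimisers is a face of $K$, apply Krein--Milman to $M$, and lift an extreme point of $M$ to one of $K$. Your linearization $G = L_1 - mL_2 \ge 0$ gives the face property and the convexity of $M$ a little more cleanly than the paper's $\varepsilon$-contradiction argument, but the idea is the same.
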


\begin{proof}
Since $L_1$ and $L_2$ are continuous on the compact set $K$ and $L_2>0$ on $K$,
the functional $F$ is continuous on $K$. Hence there exists $x_0\in K$ such that
\[
    F(x_0) = \min_{x\in K} F(x) =: m.
\]

We first show that if $x_0$ is a minimiser and can be written as a nontrivial
convex combination $x_0 = t x_1 + (1-t)x_2$ with $x_1,x_2\in K$, $t\in(0,1)$,
then both $x_1$ and $x_2$ are also minimisers, i.e.\ $F(x_1)=F(x_2)=m$.

Because $L_1$ and $L_2$ are affine, we have
\[
    L_i(x_0)
    = t L_i(x_1) + (1-t)L_i(x_2),\qquad i=1,2.
\]
Since $F(x_0)=m$, this is equivalent to
\[
    L_1(x_0) = m L_2(x_0).
\]
Suppose for contradiction that $F(x_1)>m$. Then there exists $\varepsilon>0$
such that
\[
    F(x_1) \ge m + \varepsilon
    \quad\Longrightarrow\quad
    L_1(x_1) \ge (m+\varepsilon) L_2(x_1).
\]
On the other hand, by minimality of $x_0$ we have $F(x_2)\ge m$, hence
\[
    L_1(x_2) \ge m L_2(x_2).
\]
Combining these,
\begin{align*}
    L_1(x_0)
    &= t L_1(x_1) + (1-t)L_1(x_2)
    \\
    &\ge t (m+\varepsilon)L_2(x_1) + (1-t) m L_2(x_2)
    \\
    &= m\bigl(t L_2(x_1)+(1-t)L_2(x_2)\bigr)
       + t\varepsilon L_2(x_1)
    \\
    &= m L_2(x_0) + t\varepsilon L_2(x_1).
\end{align*}
Since $L_2(x_1)>0$, the last term is strictly positive, so we obtain
\[
    L_1(x_0) > m L_2(x_0),
\]
which contradicts $L_1(x_0) = m L_2(x_0)$. Thus $F(x_1)\le m$. Together with
$F(x_1)\ge m$ (by minimality of $m$), we get $F(x_1)=m$. The same argument with
$x_1$ and $x_2$ interchanged shows $F(x_2)=m$ as well.

Now let
\[
    M := \{x\in K : F(x)=m\}
\]
denote the set of minimisers. Note that $M$ is convex by the fact that $L_1$ and $L_2$ are affine. The set $M$ is nonempty (it contains
$x_0$) and compact (as a closed subset of $K$).

By the Krein--Milman theorem, $M$ has at least one extreme point $x^\ast$,
and every extreme point of $M$ is also an extreme point of $K$. 
Finally, let $x^*$ be an extreme point of $M$ such that
$x^* = t y + (1-t) z$ for $y,z \in K$ and $t \in (0,1)$. By what we have just shown, $y,z \in M$. By assumption $x^*$ is an extreme point, meaning that $y = z = x^*$. Therefore, an extreme point of $M$ is an extreme point of $K$. This shows the claim.
\end{proof}
Specializing the previous Lemma to our situation, we find that
$\min_{\mu \in \caS_{V^2}} \bbE^{\mu_\beta}[S]$
is attained at an extreme point of $\caS_{V^2}$. Note that $\caS_{V^2}$ can be identified with the probability simplex over $N+1$ dimensions, where $2N +1 := |\Omega|$ and two linear constraints, i.e.
$$\caS_{V^2} \simeq \lt\{ w \in \bbR^{N+1}: w_j \ge0, \sum\limits_{i=0} ^{N} w_i = 1, \sum\limits_{i=1} ^N x_i ^2 w_i = V^2 \rt\}.$$
Here, $w = (w_0,\dots,w_N)$ corresponds to the probability measure
$w_0 \delta_0 + \sum\limits_{i=1} ^N \frac {w_i} 2 (\delta_{-x_i} + \delta_{x_i}).$

\begin{proposition}
    The extreme points of $\caS_{V^2}$ have at most four atoms. I.e., at most two weights $w_j$ are greater than zero.
\end{proposition}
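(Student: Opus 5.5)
We prove the statement by elementary linear algebra on the polytope description of $\caS_{V^2}$ given just above. It is the set of $w \in \bbR^{N+1}$ with $w \ge 0$, subject to the two affine equality constraints $\sum_{i=0}^N w_i = 1$ and $\sum_{i=1}^N x_i^2 w_i = V^2$. The guiding principle is the standard fact about polytopes in standard form: a point is extreme exactly when the columns of the constraint matrix indexed by its support are linearly independent. Since there are only two constraints, there are only two rows, so such a support has at most two elements. Under the identification $w \mapsto w_0\delta_0 + \sum_{i\ge1}\frac{w_i}{2}(\delta_{-x_i}+\delta_{x_i})$, each index $i \ge 1$ contributes two atoms and $i=0$ contributes one, so at most four atoms appear.

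We carry this out directly.

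\begin{enumerate}
\item Let $w$ be extreme and let $J := \{ j : w_j > 0\}$. Suppose for contradiction that $|J| \ge 3$.
\item Consider the linear map $A : \bbR^{J} \to \bbR^2$, $v \mapsto \bigl(\sum_{j\in J} v_j, \sum_{j\in J} x_j^2 v_j\bigr)$, with the convention $x_0 = 0$. Since $|J| \ge 3 > 2$, the kernel of $A$ is nontrivial, so we may pick $v \neq 0$ with $Av = 0$, extended by zero outside $J$.
\item Because $w_j > 0$ for all $j \in J$, there is an $\varepsilon > 0$ such that $w \pm \varepsilon v \ge 0$. Both points still satisfy the two equality constraints, because $Av = 0$. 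Hence $w \pm \varepsilon v \in \caS_{V^2}$.
\item Then $w = \frac12 (w+\varepsilon v) + \frac12(w - \varepsilon v)$ is a nontrivial convex combination, since $v \neq 0$. This contradicts extremality, so $|J| \le 2$.
\end{enumerate}

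The only point needing care is the bookkeeping of the identification. We should check that it is a bijective affine map between the symmetric probability measures on $\Omega$ with second moment $V^2$ and the polytope above. Symmetry forces the masses at $\pm x_i$ to be equal, which is exactly why one weight $w_i$ per pair suffices. It follows that extreme points correspond to extreme points. Counting atoms then gives at most $1+2=3$ atoms if $0 \in J$, and at most $4$ otherwise. I do not expect any real obstacle: the argument is a two-constraint Carathéodory/basic-feasible-solution count.
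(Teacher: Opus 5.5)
Your proof is correct and is essentially the paper's argument: if three or more weights are positive, the two affine constraints restricted to the support admit a nonzero direction $v$ with $Av=0$, and then $w=\tfrac12(w+\varepsilon v)+\tfrac12(w-\varepsilon v)$ shows that $w$ is not extreme. Working with the kernel of the $2\times|J|$ matrix, rather than asserting (as the paper does) that the solution set is exactly a line, is slightly cleaner because it does not require the two constraints to be independent; your explicit treatment of index $0$ via $x_0=0$ and your atom count (three atoms if $0\in J$, four otherwise) match the statement.
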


\begin{proof}
    Let $w \in \bbR^{N+1}$ and assume that there exists three coordinates (which we take to be $1,2,3$ for simplicity) so that $w_j > 0$ for $0 < j \le 3$. We show that $w$ is the midpoint of two other weights vectors $w^+,w^-$ in $\caS_{V^2}$. Indeed, the linear system of two equations
    $$ w_1 + w_2 + w_3 = 1 - w_0 -  \sum\limits_{i=4} ^N w_i$$
    $$w_1 x_{1} ^2 + w_2 x_2 ^2 + w_3 x_3 ^2 = V^2 - \sum\limits_{i=4} ^N w_i x_i ^2$$
    can be interpreted as a system of three unknowns $w_1,w_2$ and $w_3$. Therefore, it is easily verified that the solution manifold is a line in $\bbR^3$. Since $w_j > 0$ for $0<j\le 3$, there exists a direction $h \in \bbR^3$ and $\epsilon > 0$ such that
     $$w^{\pm} = (w_1 \pm \epsilon h_1 ,w_2 \pm \epsilon h_2,w _3 \pm \epsilon h_3, w_4,...,w_N) \in \caS_{V^2}.$$
     In particular, $w = (w^+ + w^-)/2$, which by definition means that $w$ is no extreme point.
\end{proof}

\begin{remark}
    While we do not need a more general statement, we want to note that this proof can possibly be extended to the setting in which $\Omega$ is at least a compact subset of e.g. $\bbR$. The fact that $n$ affine conditions on a simplex of probability measures reduce the extreme points to (at most) $n+1$ atomic measures is a classical result \cite{Wi88}.
\end{remark}

Our conclusion is that an extreme point of $\caS_{V^2}$ is at most four atomic, where the atoms come either in pairs or the minimizer is three-atomic with an atom at $0$. Let $S$ be a random variable with such a law, i.e.
\[
    S\in\{\pm a,\pm b\}.
\]
A short computation yields
\begin{align}
    Z(a,b,p;\beta)
    &:= \mathbb{E}\bigl[e^{\beta S}\bigr]
      = p\cosh(\beta a) + (1-p)\cosh(\beta b), \label{eq:Z-four-point}\\
    N(a,b,p;\beta)
    &:= \mathbb{E}\bigl[S\,e^{\beta S}\bigr]
      = p\,a\,\sinh(\beta a)
        + (1-p)\,b\,\sinh(\beta b).\label{eq:N-four-point}
\end{align}
Therefore
\begin{equation}\label{eq:S-tilted-ratio-four-point}
    \mathbb{E}^{\bbP_\beta}[S]
    = \frac{N(a,b,p;\beta)}{Z(a,b,p;\beta)}.
\end{equation}

Therefore, for fixed $V>0$ and $\beta>0$,
\begin{equation}
    \label{equ:two-dim_optimization_problem}
    \inf_{\mu\in\mathcal{S}_{V^{2}}}
        \mathbb{E}^{\mu_\beta}[S]
    =
    \inf_{\substack{0<a\le b,\; p\in[0,1]\\
                    pa^{2}+(1-p)b^{2}=V^{2}}}
        \frac{N(a,b,p;\beta)}{Z(a,b,p;\beta)}.
\end{equation}
This problem is still non-trivial to solve. In order to make progress, we now rely on the assumption $\beta V \le 2$.

\begin{proposition}
    \label{prop:minimiser_is_two_point}
    The minimizer of the problem in \eqref{equ:two-dim_optimization_problem} is given by the symmetric measure
    $$\mu = \frac 1 2 (\delta_V + \delta_{-V}).$$
    In other words, the optimal measure is two-atomic with atoms at $\pm V$.
\end{proposition}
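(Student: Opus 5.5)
The plan is to bypass the explicit two-parameter optimisation in \eqref{equ:two-dim_optimization_problem} and use a Jensen argument in the variable $u=S^2$. This gives the claim directly: every admissible law has ratio at least $V\tanh(\beta V)$, the value attained by $\frac12(\delta_V+\delta_{-V})$. Set $m:=V\tanh(\beta V)$, which is exactly $N/Z$ for the two-point law. Since $Z>0$, the claim $N/Z\ge m$ is equivalent to $N-mZ\ge 0$. By symmetry of the law of $S$, both $N$ and $Z$ are expectations of even functions of $S$. Hence
\[
N-mZ=\mathbb{E}\bigl[h(S^2)\bigr],\qquad h(u):=\sqrt u\,\sinh(\beta\sqrt u)-m\cosh(\beta\sqrt u),\quad u\ge 0,
\]
and the constraint reads $\mathbb{E}[S^2]=V^2$. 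By construction $h(V^2)=0$. So if $h$ is convex on $[0,\infty)$, Jensen's inequality gives $\mathbb{E}[h(S^2)]\ge h(V^2)=0$, with equality for the two-point law. This settles the proposition, including the three-atom case with an atom at $0$, and indeed for every symmetric law with variance $V^2$.

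To check convexity, rescale $x=\beta\sqrt u$, $x_0=\beta V\le 2$ and $t:=x_0\tanh x_0$. Up to a positive factor, $h$ becomes $k(x)=x\sinh x-t\cosh x$. Since $\frac{d}{du}=\frac{\beta^2}{2x}\frac{d}{dx}$, convexity of $h$ in $u$ amounts to monotonicity in $x$ of
\[
\frac{k'(x)}{x}=\cosh x+(1-t)\,\frac{\sinh x}{x}.
\]
Differentiating, we need for all $x>0$
\[
\sinh x+(1-t)\,\frac{x\cosh x-\sinh x}{x^2}\ \ge 0 .
\]
Because $x\cosh x-\sinh x\ge 0$, this is automatic when $t\le 1$. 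When $t>1$ it is equivalent to $t-1\le R(x)^{-1}$, where $R(x):=\dfrac{x\cosh x-\sinh x}{x^2\sinh x}$.

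The main obstacle is the elementary but fiddly claim $\sup_{x>0}R(x)=\lim_{x\downarrow 0}R(x)=1/3$. Equivalently, $R$ is decreasing, which is the inequality $3(x\cosh x-\sinh x)\le x^2\sinh x$. I would prove this by comparing Taylor series: both sides are odd power series with nonnegative coefficients, with coefficients $\frac{3(2n)}{(2n+1)!}$ against $\frac{(2n+1)(2n)}{(2n+1)!}$ for $x^{2n+1}$, and $3\le 2n+1$ for $n\ge1$. Granting this, the condition becomes $t\le 4$, i.e. $\beta V\tanh(\beta V)\le 4$. This holds because $\beta V\le 2$ gives $t\le 2\tanh 2<2$. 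This is exactly where the technical assumption $\beta V\le 2$ of Lemma~\ref{lemma:AB-tanh_bound} is used, and it also shows the assumption could be relaxed somewhat.

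Finally, the minimiser being two-atomic follows because $\frac12(\delta_V+\delta_{-V})$ attains equality in Jensen. If desired, this can be reconciled with the four-point reduction of Lemma~\ref{lem:ratio-extreme-attained}, although the reduction is no longer needed.
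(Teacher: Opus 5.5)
Your proposal is correct and is essentially the paper's argument: Jensen's inequality in $u=S^2$ for the convex function $u\mapsto k(\sqrt u)$, $k(x)=x\sinh(\beta x)-V\tanh(\beta V)\cosh(\beta x)$, which vanishes at $u=V^2$. As you note, this works for every symmetric law, so the four-point reduction is not actually needed.

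Both proofs reduce convexity to $h=xk''-k'\ge 0$, since $\frac{d}{dx}(k'/x)=h/x^2$. The paper gets this from $h(0)=0$ and $h'=xk'''>0$. Your Taylor comparison gives it under the sharper condition $\beta V\tanh(\beta V)\le 4$, with strict inequality, which also yields uniqueness of the minimiser.
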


\begin{proof}
    Let $S$ be concentrated on the four points $\{\pm a, \pm b\}$ so that its variance is $V^2$ and 
    $F_{\beta}(a,b,p) :=\bbE^{\bbP_\beta}[S]$ is given by the fraction in \eqref{eq:S-tilted-ratio-four-point}. Define the function
    $$k(x) := x \sinh(\beta x) - V\tanh(V \beta) \cosh(\beta x).$$
    A simple calculation shows that
    $$F_\beta (a,b,p) \ge V \tanh(V \beta) \iff \caN(a,b,p):= pk(a) + (1-p)k(b) \ge 0.$$
    Our goal will be to show that $\varphi(t) := k(\sqrt{t})$ is convex on $[0,\infty)$, which will yield the claim. Differentiate $k$ two times to find
    \begin{align*}
k'(x)
&= (1 - \beta V\tanh(\beta V))\sinh(\beta x) 
   + \beta x\cosh(\beta x), \\
k''(x)
&= \beta\cosh(\beta x)\bigl(2 - \beta V\tanh(\beta V)\bigr)
   + \beta^2 x\sinh(\beta x).
\end{align*}

Since $\beta V<2$ implies $\beta V\tanh(\beta V)<2$, we have
\[
2 - \beta V\tanh(\beta V) > 0,
\]
so $k''(x)>0$ for all $x>0$, showing convexity.
Next, compute
\[
\varphi''(t)
= \frac{1}{4 t^{3/2}}\bigl(xk''(x)-k'(x)\bigr),
\qquad x=\sqrt t
\]
and take $h(x):=xk''(x)-k'(x)$. Differentiating once we find
$h'(x)=xk'''(x)$.
Since
\[
k''(x)
= \beta\cosh(\beta x)\bigl(2 - \beta V\tanh(\beta V)\bigr)
  + \beta^2 x\sinh(\beta x),
\]
one checks
\[
k'''(x)
= \beta^2\sinh(\beta x)\bigl(3 - \beta V\tanh(\beta V)\bigr)
  + \beta^3 x\cosh(\beta x),
\]
which is strictly positive for all $x>0$ because  
$\beta V\tanh(\beta V) < 2$ implies  
$3 - \beta V\tanh(\beta V) > 1$.
Thus $h'(x)>0$, and since $h(0)=0$
\[
h(x)>0 \quad\text{for all }x>0.
\]
Hence $\varphi''(t)>0$ for all $t>0$, i.e.\ $\varphi$ is strictly convex.
Let $Y = S^2$, which means that
\[
\mathbb{P}(Y=a^2)=p,\qquad \mathbb{P}(Y=b^2)=1-p.
\]
By the variance constraint on $S$ it holds that
$\mathbb{E}[Y] = p a^2 + (1-p)b^2 = V^2$.
Note that
\[
\mathcal{N}(a,b,p)
= \mathbb{E}[k(\sqrt Y)]
= \mathbb{E}[\varphi(Y)].
\]
Since $\varphi$ is strictly convex, an application of Jensen's inequality yields
\[
\mathbb{E}[\varphi(Y)]
\ge \varphi(\mathbb{E}[Y])
= \varphi(V^2)
= k(V)
= 0.
\]
Strictness holds unless $Y$ is a.s.\ constant, i.e.\ $a^2=b^2=V^2$.
Therefore $\mathcal{N}(a,b,p)\ge 0$, with equality only in the two--point case.  
Equivalently,
\[
F_\beta(a,b,p) \ge V\tanh(\beta V),
\]
with equality if and only if $a=b=V$.
\end{proof}

\begin{proof}[Proof of Lemma \ref{lemma:AB-tanh_bound}]
    Clearly
    $$\bbE^{\bbP_{\beta,2}}[yz] \ge \inf\limits_{\theta \in \caS_{V^2}} \bbE^{\bbP_{\beta,\theta}} [yz],$$
    where
    $$\bbP_{\beta,\theta}(\de y \de z) \propto \exp \lt( \beta yz  \rt) \theta (\de y) \theta ( \de z).$$
    By Proposition \ref{prop:minimiser_is_two_point} we find that the infimum on the rhs is obtained by taking $\theta = (\delta_V + \delta_{-V})/2$. The claim follows by the identity \eqref{eq:S-tilted-ratio-four-point}.
\end{proof}

{\bf Acknowledgments:} This research was supported by DFG grant No 535662048.
\bibliographystyle{alpha}
\bibliography{bib}
\end{document}